\newtheorem{thm}{Theorem}[section]
\newtheorem{lem}[thm]{Lemma}
\newtheorem{prop}[thm]{Proposition}
\newtheorem{cor}[thm]{Corollary}
\theoremstyle{definition}
\newtheorem{dfn}[thm]{Definition}
\newtheorem{rem}[thm]{Remark}
\newtheorem{ex}[thm]{Example}
\newtheorem{conv}[thm]{Convention}
\theoremstyle{remark}
\newtheorem*{ac}{Acknowledgments}
\numberwithin{equation}{thm}
\def\b{\beta}
\def\d{\delta}
\def\D{\Delta}
\def\e{\varepsilon}
\def\g{\gamma}
\def\G{\Gamma}
\def\k{\kappa}
\def\l{\lambda}
\def\cl{\operatorname{cl}}
\def\Hom{\operatorname{Hom}}
\def\Tor{\operatorname{Tor}}
\def\tr{\operatorname{tr}}
\def\w{\omega}
\def\S{\mathcal{S}}
\def\T{\mathcal{T}}
\def\U{\mathcal{U}}
\def\Z{\mathbb{Z}}
\def\~{\widetilde}
\def\Ext{\operatorname{Ext}}
\def\image{\operatorname{Im}}
\def\lim{\operatorname{lim}}
\def\p{\mathfrak{p}}
\def\q{\mathfrak{q}}
\begin{document}
\allowdisplaybreaks
\title{Trace ideals of canonical modules over Schubert cycles and determinantal rings}
\author{Kaito Kimura}
\address{Graduate School of Mathematics, Nagoya University, Furocho, Chikusaku, Nagoya 464-8602, Japan}
\email{m21018b@gmail.com}
\thanks{2020 {\em Mathematics Subject Classification.} 13H10; 13C70; 13F50}
\thanks{{\em Key words and phrases.} canonical module, trace ideal, Gorenstein locus, canonical trace radical (CTR)}
\begin{abstract}
In this paper, we study the canonical trace of Schubert cycles and determinantal rings.
As an application, we give an explicit description of the non-Gorenstein locus and show that its structure is compatible with the known representations of the singular locus and the canonical module.
Furthermore, for the CTR property recently introduced by Miyazaki, we establish its stability under base change and provide a characterization in the case of determinantal rings.
\end{abstract}
\maketitle
\section{Introduction}

Throughout the present paper, all rings are assumed to be commutative and Noetherian.
In each of the three theorems stated in this introduction, (1) follows Conventions \ref{conv of Schubert cycle} and \ref{conv of Schubert cycle2}, while (2) follows Conventions \ref{conv of Determinantal rings} and \ref{conv of Determinantal rings 2}.

Determinantal varieties and Schubert varieties are fundamental and important objects in algebraic geometry and representation theory.
The determinantal rings and Schubert cycles that arise as their coordinate rings admit the structure of graded algebras with straightening law, and their ring-theoretic properties have been extensively studied.
It is known that the singular locus of these rings, as well as the class of the canonical module in the divisor class group, can be described explicitly in terms of combinatorial data called blocks and gaps; see \cite[Sections 6 and 8]{BV} for instance.
In light of these results, it is natural to expect that the non-Gorenstein locus can also be determined by the same data.
One of the main results of this paper verifies this expectation.

\begin{thm}\label{intro Gor locus old} {\rm [Theorems \ref{gor locus of Sch. cycle} and \ref{gor locus of det ring}]}
Let $B$ be a ring.
\begin{enumerate}[\rm(1)]
\item 
Let $\p$ be a prime ideal of the Schubert cycle $G_B(X; \g)$ and $\q=\p\cap B$.
Then $G_B(X; \g)_\p$ is Gorenstein if and only if $B_\q$ is Gorenstein and $\p$ does not contain $J_B(x;\sigma_i)$ for any $i\in \U$. 
\item 
Let $\p$ be a prime ideal of the determinantal ring $R_B(X; \d)$ and $\q=\p\cap B$. Then $R_B(X; \d)_\p$ is Gorenstein if and only if $B_\q$ is Gorenstein and $\p$ does not contain $I_B(x;\tau_i)$ for any $i\in \mathfrak{U}$. 
\end{enumerate}
\end{thm}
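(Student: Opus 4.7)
The plan is to reduce both parts to an explicit computation of the canonical trace and then apply the general principle that, for a Cohen--Macaulay ring $R$ with canonical module $\w_R$, the non-Gorenstein locus equals $V(\tr(\w_R))$. Since $G_B(X;\g)$ and $R_B(X;\d)$ are ASL rings obtained by base change from their $\Z$-models, I would first dispose of the base by noting that the natural map $B \to G_B(X;\g)$ (resp.\ $B \to R_B(X;\d)$) is flat with Gorenstein fibers: the $\Z$-models $G(X;\g)$ and $R(X;\d)$ are Cohen--Macaulay with Gorenstein generic locus, so the standard transfer of the Gorenstein property along flat maps with Gorenstein closed fibers yields that $G_B(X;\g)_\p$ is Gorenstein if and only if $B_\q$ is Gorenstein and the fiber $G_B(X;\g)_\p \otimes_{B_\q} \kappa(\q)$ is Gorenstein. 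This lets me assume $B = B_\q$ is Gorenstein (in fact a field) and reduces the theorem to identifying the non-Gorenstein locus of the absolute Schubert cycle or determinantal ring with $V\!\left(\sum_{i\in\U} J_B(x;\sigma_i)\right)$ and $V\!\left(\sum_{i\in\mathfrak{U}} I_B(x;\tau_i)\right)$ respectively.

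The key step is then to show that the canonical trace $\tr(\w)$ equals this sum. The existing description of $\w$ as a divisorial ideal built from ``blocks and gaps'' (\cite[Sections 6 and 8]{BV}) expresses $\w$ as a specific product of Plücker-type elements associated with the corners $\sigma_i$ (resp.\ $\tau_i$). I would compute $\tr(\w) = \w \cdot \w^{-1}$ working inside the divisor class group: since $[\w]$ decomposes as a combination of the classes coming from the individual corners, the trace ideal should split as a sum indexed by $i\in \U$ (resp.\ $i\in \mathfrak{U}$), and each summand is exactly the ideal $J_B(x;\sigma_i)$ (resp.\ $I_B(x;\tau_i)$) generated by the standard monomials below the corresponding corner. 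This is presumably the content of results on the canonical trace proved earlier in the paper (as advertised in the abstract), and the theorem follows from them by the $V(\tr(\w))$ principle.

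The main obstacle I expect is the trace computation itself. Writing $\w$ as a fractional ideal is easy, but identifying $\w^{-1}$ and then $\w \cdot \w^{-1}$ in ASL-friendly terms requires careful bookkeeping with the straightening relations: one must show that the naive product is closed under straightening and that no additional generators appear. A clean way to organize this is to argue block-by-block, showing that the contribution of the $i$-th corner to $\tr(\w)$ is exactly $J_B(x;\sigma_i)$, and then using that distinct corners contribute independently because their associated height-one primes are distinct.

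Finally, part (2) can be handled either by repeating the argument in the determinantal setting or by exhibiting $R_B(X;\d)$ as a special Schubert cycle $G_B(X;\g)$ for a suitable $\g$, under which the indexing data $\mathfrak{U}, \tau_i, I_B(x;\tau_i)$ correspond to $\U, \sigma_i, J_B(x;\sigma_i)$; this reduction is standard in the \cite{BV} framework and would make (2) an immediate corollary of (1).
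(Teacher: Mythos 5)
Your opening reduction matches the paper's: since $B_\q\to G_B(X;\g)_\p$ is flat and local, $G_B(X;\g)_\p$ is Gorenstein if and only if $B_\q$ and the closed fiber are Gorenstein, and the closed fiber is a localization of $G_{\kappa(\q)}(X;\g)$ for the residue field $\kappa(\q)$; this reduces everything to the field case plus the principle that the non-Gorenstein locus is $V(\tr(\w))$. (Your justification via ``flat maps with Gorenstein closed fibers'' is off --- the closed fibers are exactly the rings whose Gorenstein locus is at issue and they are not Gorenstein in general --- but the equivalence you actually state is the correct standard fact for flat local homomorphisms.)

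The genuine gap is in the claimed shape of the canonical trace. You assert that $\tr(\w)$ ``splits as a sum indexed by $i\in\U$'' with summands $J_B(x;\sigma_i)$, so that the non-Gorenstein locus is $V\bigl(\sum_{i\in\U}J_B(x;\sigma_i)\bigr)$. This is the wrong ideal operation and therefore the wrong quantifier: $V\bigl(\sum_{i}J_B(x;\sigma_i)\bigr)=\bigcap_{i}V(J_B(x;\sigma_i))$ is the set of primes containing \emph{all} of the $J_B(x;\sigma_i)$, whereas the theorem says $G_B(X;\g)_\p$ fails to be Gorenstein precisely when $\p$ contains \emph{some} $J_B(x;\sigma_i)$, i.e.\ the locus is the union $\bigcup_{i\in\U}V(J_B(x;\sigma_i))=V\bigl(\bigcap_{i\in\U}J_B(x;\sigma_i)\bigr)$. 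What Propositions \ref{general case of cano trace} and \ref{general case of cano trace det ring} actually prove is $\tr(\w)=\prod_{h=1}^{\k-\k'}\bigl(\bigcap_{i\in\U_h}J_B(x;\sigma_i)\bigr)$, a product of intersections whose radical is $\bigcap_{i\in\U}J_B(x;\sigma_i)$; with your ``sum'' the theorem would come out false. Your sketch also omits the two points that carry the real content: (a) identifying which corners occur at all, namely $i\in\U$ exactly when $\k_i\ne\k_{i-1}$ (not all $i$ contribute), and (b) the standard-monomial computation --- showing via the straightening law that $\g^{\k-\k'}\tr(\w)$ is generated by elements $\g^{\k-\k'}\prod_h(\xi_h\sqcup\nu_{\k-\k'-h+1})$ and then matching the resulting set of joins with $\bigcap_{k\in\U_h}\{\d\mid\d\ngeq\sigma_k\}$; ``distinct corners contribute independently'' is precisely what must be proved, not assumed. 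Finally, for part (2), $R_B(X;\d)$ is not a Schubert cycle for a suitable $\g$ but a \emph{dehomogenization} of one, and carrying the trace ideal across dehomogenization is not formal: it requires Proposition \ref{dehom ippan} (an $\Ext^1$-vanishing argument showing $\tr_R(M)A=\tr_A(M\otimes_RA)$).
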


\noindent 
This theorem is a Gorenstein analogue of \cite[Theorems 6.7 and 6.10]{BV}.
In comparing these results, it is worth noting that some of the minimal prime ideals of the singular locus of a determinantal ring or a Schubert cycle account for all of the minimal prime ideals of the non-Gorenstein locus (when the base ring is a regular).
Furthermore, which minimal prime ideals constitute the non-Gorenstein locus can be determined by the structure of blocks and gaps.
These facts may be related to studies on the Gorenstein locus of Schubert varieties; see \cite{P, WY} for instance.

Theorem \ref{intro Gor locus old} is obtained by a detailed analysis of the canonical trace of determinantal rings and Schubert cycles.
The canonical trace has been actively studied in recent years; see \cite{DKT, KS, L, Myst, MV, Miy2, Miy} for instance.
In particular, Ficarra, Herzog, Stamate, and Trivedi \cite{FHST} studied the canonical trace of determinantal rings over a field in the most representative case.
We provide a description of the canonical trace of determinantal rings and Schubert cycles in a general setting, extending their result.

\begin{thm}\label{intro cano trace} {\rm [Propositions \ref{general case of cano trace} and \ref{general case of cano trace det ring}]}
Let $B$ be a Gorenstein normal domain.
\begin{enumerate}[\rm(1)]
\item The canonical trace of the Schubert cycle $G_B(X; \g)$ is given by
$\prod_{h=1}^{\k-\k'}( \bigcap_{i\in\U_h} J_B(x;\sigma_i)).$

\item The canonical trace of the determinantal ring $R_B(X; \d)$ is given by
$\prod_{h=1}^{\l-\l'}( \bigcap_{i\in\mathfrak{U}_h} I_B(x;\tau_i)).$
\end{enumerate}
\end{thm}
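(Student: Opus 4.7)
The starting point is the standard identity $\tau_R(\omega_R)=\omega_R\cdot\Hom_R(\omega_R,R)=\omega_R\cdot\omega_R^{-1}$, valid for any Cohen--Macaulay normal domain $R$ with canonical module $\omega_R$, where $\omega_R^{-1}$ is taken inside the field of fractions $Q(R)$. Thus, once $\omega_R$ is presented explicitly as a divisorial ideal, the canonical trace is computed by an ideal product inside $Q(R)$.

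For (1), the plan is to begin from the description of the divisor class group of $G_B(X;\gamma)$ and of the canonical class due to Bruns--Vetter \cite{BV}: the class group is free abelian, generated by the classes of the height-one primes $J_B(x;\sigma_i)$ for $i\in\U$, and the canonical class is encoded by the partition $\U=\U_1\sqcup\cdots\sqcup\U_{\kappa-\kappa'}$. With this in hand, I would write $\omega_R$ as an explicit combination of the primes $J_B(x;\sigma_i)$, form $\omega_R^{-1}$ by inverting at the level of divisors, and then multiply out the product $\omega_R\cdot\omega_R^{-1}$. The target formula $\prod_{h=1}^{\kappa-\kappa'}(\bigcap_{i\in\U_h}J_B(x;\sigma_i))$ is expected to emerge because, within each layer $\U_h$, the relevant primes are pairwise comaximal after suitable localization, so their intersection is a divisorial ideal matching the corresponding product, while across different layers the divisorial contributions of $\omega_R$ and $\omega_R^{-1}$ cancel, leaving exactly one factor per layer.

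Part (2) would proceed by the same strategy, with the Schubert combinatorics replaced by the block/gap data of the determinantal ring; the classical field case of a generic determinantal ring is treated in \cite{FHST} and serves as a template. The passage from a field of coefficients to a general Gorenstein normal domain $B$ is handled by flat base change: the ideals $J_B(x;\sigma_i)$ and $I_B(x;\tau_i)$, the canonical modules, and the formation of the trace ideal all commute with tensoring along a flat extension of the ground ring, so it is enough to establish the formulas in the field case.

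The main obstacle I anticipate is the combinatorial identification of the ideal-theoretic product $\omega_R\cdot\omega_R^{-1}$ with the stated intersection-product. Within a single $\U_h$ one needs that the intersection of the corresponding height-one primes is the divisorial ideal of the expected class, and across layers one needs the precise cancellation in the divisor group dictated by the block/gap structure. Verifying these identities will require careful bookkeeping through the ASL structure of $G_B(X;\gamma)$ and $R_B(X;\delta)$, which is where the bulk of the technical work will lie.
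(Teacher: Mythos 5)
The opening move is right: the paper's proof also begins from $\tr_R(\w)=\w\cdot\w^{-1}$ and an explicit divisorial presentation of $\w$. But there is a genuine error in the plan that blocks everything after that.

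You write that the class group of $G_B(X;\g)$ is ``generated by the classes of the height-one primes $J_B(x;\sigma_i)$ for $i\in\U$'' and propose to express $\w$ and $\w^{-1}$ as divisorial combinations of these. This is false on two counts. First, the divisor class group is generated by the classes $\cl(J_B(x;\zeta_i))$ for $0\le i\le t$ (which are genuinely height one), subject to the relation $\sum_i\cl(J_B(x;\zeta_i))=0$ coming from $\g R=\bigcap_i J_B(x;\zeta_i)$; the canonical class is $\sum_i\k_i\cl(J_B(x;\zeta_i))$. Second, and more importantly, the primes $J_B(x;\sigma_i)$ are \emph{not} height one: they are the defining primes of the singular locus (\cite[Theorem 6.7]{BV}), hence of codimension at least two in this normal domain, and one has $\zeta_{i-1},\zeta_i\le\sigma_i$. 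Consequently $\bigcap_{i\in\U_h}J_B(x;\sigma_i)$ is not a divisorial ideal, and the picture of ``divisorial cancellation across layers'' does not correspond to anything in the class group. The whole point of the statement being proved is that the canonical trace is \emph{not} divisorial; it is a product of unmixed ideals of higher codimension, which is why it can detect the non-Gorenstein locus.

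What your plan is missing is the bridge from the $\zeta_i$-divisorial picture to the $\sigma_i$-answer. In the paper this is done by ASL combinatorics: after writing $\g^{\k-\k'}\tr_R(\w)=\bigl(\bigcap_i J_i^{\k-\k_i}\bigr)\bigl(\bigcap_i J_i^{\k_i-\k'}\bigr)$ with $J_i=J_B(x;\zeta_i)$, one reads off explicit standard-monomial generators of each factor, then uses a straightening computation (the relation $\xi\nu=\pm\,\g\,(\xi\sqcup\nu)$ whenever $\xi$ lies below all $\zeta_i$ with $i\in\S_h$ and $\nu$ below all $\zeta_j$ with $j\in\T_h$) and the key combinatorial identity
\[
\bigcap_{k\in\U_h}\{\d\in\G(X;\g)\mid\d\ngeq\sigma_k\}=\{\xi\sqcup\nu\mid \xi\in\Omega_i\ \forall i\in\S_h,\ \nu\in\Omega_j\ \forall j\in\T_h\}
\]
to identify the resulting ideal with $\prod_h\bigcap_{i\in\U_h}J_B(x;\sigma_i)$. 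None of this is a formal consequence of class-group bookkeeping; it is the technical core of the argument, and your plan currently has no mechanism to produce it.

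Two smaller remarks. For the passage from a field to a general Gorenstein normal domain $B$, the paper does \emph{not} reduce to the field case by flat base change; the divisorial and ASL machinery of \cite{BV} is set up over an arbitrary normal Noetherian base and the proof runs directly over $B$, with a single reduction to $B=\Z$ used only to pin the coefficient $c=\pm1$ in the straightening relation. And for part (2), rather than redoing the computation inside $R_B(X;\d)$ following \cite{FHST}, the paper transports the Schubert-cycle result through dehomogenization (Proposition~\ref{dehom ippan} and Corollary~\ref{dehom cor}), using the fact that the trace ideal commutes with killing $1-x$; this is cleaner and avoids reproving the combinatorics of Lemma~\ref{degree no kensa} just to get the trace formula.
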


Miyazaki \cite{Miy} introduced a new class of rings called CTR, which lies between the Gorenstein property and the Cohen--Macaulay property and is characterized by the condition that the canonical trace is radical.
He gave a combinatorial characterization of the CTR property for Schubert cycles over a field.
For determinantal rings, Schubert cycles, and more generally for certain algebras with straightening law, standard properties of rings such as being Cohen--Macaulay, Gorenstein, reduced, normal or factorial are preserved under base change; see \cite[Sections 5, 6 and 8]{BV}.
The following theorem asserts that the CTR property enjoys a similar stability under base change, which suggests that CTR is a natural class of rings.

\begin{thm}\label{intro base change} {\rm [Theorems \ref{main base change} and \ref{main CTR det}]}
Let $B$ be a reduced Cohen--Macaulay ring with canonical module.
\begin{enumerate}[\rm(1)]
\item $G_B(X; \g)$ is CTR if and only if $B$ is CTR and $\k-\k'\le 1$.
\item $R_B(X; \d)$ is CTR if and only if $B$ is CTR and $\l-\l'\le 1$.
\end{enumerate}
\end{thm}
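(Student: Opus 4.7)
\medskip

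\noindent\textbf{Proof plan.} The strategy is to use the explicit formula for the canonical trace given by Theorem~\ref{intro cano trace} and analyze precisely when this product is radical. The two statements are completely parallel, so I focus on (1); part (2) follows verbatim by replacing $J_B(x;\sigma_i)$ and $\U_h$ with $I_B(x;\tau_i)$ and $\mathfrak{U}_h$. Set $R=G_B(X;\g)$ and $\mathfrak{a}_h=\bigcap_{i\in\U_h}J_B(x;\sigma_i)$, so that by Theorem~\ref{intro cano trace}(1) we have $\tau_R=\prod_{h=1}^{\k-\k'}\mathfrak{a}_h$. Using the identity $\sqrt{\tau_R}=\bigcap_h\sqrt{\mathfrak{a}_h}$, the CTR condition becomes the equality $\prod_h\mathfrak{a}_h=\bigcap_h\sqrt{\mathfrak{a}_h}$.

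For the direction $(\Leftarrow)$, assume $B$ is CTR and $\k-\k'\le 1$. If $\k=\k'$ the product is empty, so $\tau_R=R$ and $R$ is in fact Gorenstein by Theorem~\ref{intro Gor locus old}(1). If $\k-\k'=1$, then $\tau_R=\mathfrak{a}_1=\bigcap_{i\in\U_1}J_B(x;\sigma_i)$, which I would show is radical by base-changing from the generic case (where each $J_B(x;\sigma_i)$ is known to be prime) to $B$, using that $B$ being CTR—in particular reduced with radical canonical trace—controls how radicality propagates along the faithfully flat extension $B\to R$. Hence $\tau_R$ is a finite intersection of radical ideals and thus radical.

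For the direction $(\Rightarrow)$, assume $R$ is CTR. That $B$ must also be CTR should follow from compatibility of the canonical trace with the free extension $B\to R$: localizing at $\q\in\spec B$ and comparing $\tau_B$ with the appropriate contraction of $\tau_R$ transfers radicality down to $B$. The substantive content of the theorem is the bound $\k-\k'\le 1$. Suppose for contradiction that $\k-\k'\ge 2$. The combinatorics of blocks and gaps that organize the $\U_h$ force a common prime divisor of at least two factors $\mathfrak{a}_h$, $\mathfrak{a}_{h'}$; equivalently, by Theorem~\ref{intro Gor locus old}(1), one can locate a prime $\p$ of $R$ in the non-Gorenstein locus that witnesses at least two distinct blocks. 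Localizing at such a $\p$, the product $(\tau_R)_\p$ becomes strictly smaller than $\bigcap_h(\mathfrak{a}_h)_\p$ by a direct multiplicity comparison, so $\tau_R\neq\sqrt{\tau_R}$, contradicting the CTR hypothesis.

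The main obstacle, and the heart of the argument, is this last combinatorial step: proving that whenever $\k-\k'\ge 2$, the product of the $\mathfrak{a}_h$'s is strictly smaller than its radical. I expect this to require a careful inspection of the gap combinatorics to pinpoint a minimal prime of the non-Gorenstein locus at which at least two factors $\mathfrak{a}_h$ survive nontrivially, followed by a local calculation producing an explicit element of $\sqrt{\tau_R}\setminus\tau_R$. Once this local obstruction is identified, both parts of the theorem fall into place simultaneously.
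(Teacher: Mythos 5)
Your plan has a genuine gap at its foundation: it treats the formula of Theorem~\ref{intro cano trace} as applicable throughout, but that theorem requires $B$ to be a Gorenstein normal domain, whereas the statement at hand assumes only that $B$ is a reduced Cohen--Macaulay ring with canonical module. Over such a base the canonical trace of $R=G_B(X;\g)$ is \emph{not} $\prod_h\mathfrak{a}_h$; as the paper shows, it equals $\bigl(\bigcap_{i\in I}J_B(x;\sigma_i)\bigr)\cap\bigl(\tr_B(\w_B)R\bigr)$, so there is an extra factor carrying the non-Gorenstein locus of $B$. Your plan never engages with this factor. A concrete symptom: you claim that $\k=\k'$ forces $\tau_R=R$ and hence $R$ Gorenstein, but this is false whenever $B$ is CTR without being Gorenstein. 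The paper bridges the gap by writing $R=A\otimes_{\Z}B$ with $A=G_{\Z}(X;\g)$, taking $\w_R=\w_A\otimes_{\Z}\w_B$, and proving a flatness lemma (Lemma~\ref{HHS not field}) to get $\tr_A(\w_A)R\cap\tr_B(\w_B)R\subseteq\tr_R(\w_R)\subseteq\tr_B(\w_B)R$; the reverse inclusion is then extracted from the Gorenstein-locus description. None of this machinery appears in your outline, and without it you cannot even write down $\tau_R$ correctly.

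Your ``only if'' direction also remains at the level of a heuristic. You acknowledge that the key step, that $\k-\k'\ge 2$ forces $\tau_R\neq\sqrt{\tau_R}$, is not carried out. The paper does not run a ``multiplicity comparison'' at a well-chosen prime; it instead localizes at a minimal prime $\q$ of $B$ (using reducedness of $B$) so that $B_\q$ is a field, reducing to Corollary~\ref{miy main thm}, where a degree argument inside the ASL shows $\g\in\sqrt{\tau_R}\setminus\tau_R$. Separately, it recovers the CTR property of $B$ by localizing at the element $\g$ and using that $A_\g$ is a localization of a polynomial ring over $\Z$, so $(\w_A)_\g\cong A_\g$. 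Finally, ``part (2) follows verbatim'' is not accurate: the degree argument above exploits that every poset element in $\G(X;\g)$ has degree one, which fails in $\D(X;\d)$; the paper needs the additional Lemma~\ref{degree no kensa} to control minimal degrees of elements of $\D(X;\d)\setminus\D(X;\tau_i)$ before the determinantal analogue (Lemma~\ref{new main lem}) goes through.
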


The organization of this paper is as follows.
In Section 2, we collect basic facts on trace ideals.
The result on dehomogenization developed in this section plays an important role in applying results for Schubert cycles to determinantal rings.
In Section 3, we study the canonical trace of Schubert cycles and the stability of the CTR property under base change.
Among the main results stated in Introduction, the part (1) is proved in this section.
In Section 4, considering dehomogenization, we derive analogues for determinantal rings from the results on Schubert cycles obtained in Section 3, thereby providing (2) of the main results.

\section{Basic facts on the canonical trace}

In this section, we study the fundamental properties of the trace ideal of a (canonical) module.

\begin{dfn}
Let $R$ be a ring, and $M$ an $R$-module.
The \textit{trace ideal} $\tr_R (M)$ of $M$ is defined as follows:
$$
\tr_R(M):= \sum_{f\in\Hom_R(M,R)} \image f.
$$
As is well known, for a flat $R$-algebra $S$ and a finitely generated $R$-module $M$, $\tr_{S}(M\otimes_R S)=\tr_R(M)S$.
In particular, the trace ideal of a finitely generated module is compatible with localization.
\end{dfn}

The following concept was introduced by Miyazaki \cite{Miy}.

\begin{dfn}
A Cohen--Macaulay ring $R$ is called \textit{canonical trace radical} (CTR for short) if for any prime ideal $\p$ of $R$, $R_\p$ admits a canonical module $\w_{R_\p}$ and $\tr_{R_\p} (\w_{R_\p})$ is a radical ideal of $R_\p$.
\end{dfn}

In \cite{Miy}, it is shown that the CTR property is preserved under various operations.
Before stating our results, we recall some basic facts.

\begin{rem}\label{cano trace no remark}
Let $R$ be a Cohen--Macaulay ring.

\noindent (1) An $R$-module $\w$ is called a canonical module if, for every prime ideal $\p$ of $R$, the localization $\w_\p$ is a canonical module of the local ring $R_\p$.
In general, two canonical modules $\w$ and $\w'$ need not be isomorphic, however, their trace ideals coincide.
In fact, the canonical homomorphism $\w\otimes_R \Hom_R(\w,\w')\to \w': x\otimes f \mapsto f(x)$ is an isomorphism since it is an isomorphism after localizing at each prime ideal.
So, each element of $\w'$ is a sum of images of homomorphisms from $\w$ to $\w'$, which implies that $\tr_{R}(\w')$ is contained in $\tr_{R}(\w)$.
By symmetry between $\w$ and $\w'$, it follows that $\tr_{R}(\w')=\tr_{R}(\w)$.
Hence, the choice of canonical module is irrelevant when considering trace ideals.
This unique ideal is often called the \textit{canonical trace}.

\noindent (2) It is well-known that the trace ideal of a canonical module is the defining ideal of the non-Gorenstein locus.
In other words, if $\w$ is a canonical module of $R$ and $\p$ is a prime ideal of $R$, then $\p$ contains $\tr_{R}(\w)$ if and only if $R_\p$ is not Gorenstein.
\end{rem}

The class of rings considered in this paper are Schubert cycles and determinantal rings characterized by their dehomogenizations.
These are among the most important objects in combinatorial rings called graded algebras with straightening law (ASL for short); see \cite{BV} for instance.
We now discuss the relationship between dehomogenization and the trace ideal, which is needed to extend results on Schubert cycles to determinantal rings.

\begin{prop}\label{dehom ippan}
Let $R=\bigoplus_{i\ge 0} R_n$ be a graded ring, $M$ a finitely generated graded $R$-module, $x\in R_1$ a non-nilpotent element, and $A=R/(1-x)$.
Then $\tr_R (M) A= \tr_A(M\otimes_R A)$.
In particular, if $x$ is a non-zerodivisor on $R/\tr_R (M)$, then $\tr_R (M)$ is a prime, (resp. primary, radical) ideal if and only if so is $\tr_A(M\otimes_R A)$.
\end{prop}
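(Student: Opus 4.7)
The plan is to factor the dehomogenization $R \to A$ through the localization $R \to R_x$ and exploit the $\Z$-graded isomorphism $R_x \cong A[x, x^{-1}]$, together with the standard identifications $A \cong (R_x)_0$ (via $\bar r \leftrightarrow r/x^d$ for $r \in R_d$) and $M \otimes_R A \cong (M_x)_0$.

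First, since $M$ is finitely generated and $R \to R_x$ is flat, one has $\tr_R(M) R_x = \tr_{R_x}(M_x)$. Next, I analyze $\tr_{R_x}(M_x)$ directly using the graded structure: as $M$ is a finitely generated graded $R$-module, $M_x \cong (M_x)_0 \otimes_A R_x$ as a graded $R_x$-module, and $\Hom_{R_x}(M_x, R_x)$ decomposes as the direct sum of its graded components. A degree $n$ graded homomorphism $f \colon M_x \to R_x$ is determined by its restriction to $(M_x)_0$, which lands in $(R_x)_n = x^n A$; composing with multiplication by $x^{-n}$ yields an $A$-linear map $g_f \colon (M_x)_0 \to A$, and this assignment is a bijection. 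Because $x$ is a unit in $R_x$, the $R_x$-submodule $\image f$ equals $g_f((M_x)_0) \cdot R_x$, so summing over $f$ gives $\tr_{R_x}(M_x) = \tr_A((M_x)_0) \cdot R_x$. Reducing modulo $(1-x) R_x$ (equivalently, applying $- \otimes_{R_x} A$) and combining with the first identity yields $\tr_R(M) A = \tr_A(M \otimes_R A)$.

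For the second assertion, put $I = \tr_R(M)$. Since $M$ is finitely generated and graded, $\Hom_R(M, R)$ decomposes into graded components, so $I$ is a homogeneous ideal of $R$. The hypothesis that $x$ is a non-zerodivisor on $R/I$ gives an embedding $R/I \hookrightarrow (R/I)_x$, together with the natural identification $(R/I)_x = R_x/I R_x = (A/IA)[x, x^{-1}]$. Each of the properties prime (equivalently, domain), primary, and reduced is preserved by localization at a non-zerodivisor and by passage to and from Laurent polynomial extensions (the latter via the standard description of the associated primes of $B[x, x^{-1}]$ in terms of those of $B$). Chaining these equivalences, with $B = R/I$ on one side and $B = A/IA$ on the other, yields the three claimed equivalences simultaneously.

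The main obstacle I anticipate is the graded-homomorphism bookkeeping in the second paragraph: one must carefully verify that the graded decomposition of $\Hom_{R_x}(M_x, R_x)$, the bijection with $\Hom_A((M_x)_0, A)$, and the identification of images as $R_x$-submodules are all mutually compatible, and in particular that they behave well after killing $1 - x$. Everything else is essentially formal, relying on flatness of localization and the standard behavior of prime-like properties under Laurent polynomial extensions.
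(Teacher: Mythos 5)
Your proof is correct, but it takes a genuinely different route from the paper's. The paper works directly with $y=1-x$: since the degree-zero component of $y$ is a unit, $y$ is a non-zerodivisor on every finitely generated graded $R$-module, so the long exact sequence attached to $0\to R\xrightarrow{y} R\to A\to 0$ (together with injectivity of $y$ on $\Ext_R^1(M,R)$) gives $\Hom_R(M,R)\otimes_R A\cong\Hom_A(M\otimes_R A,A)$; tensoring the presentation $\Hom_R(M,R)\otimes_R M\to R\to R/\tr_R(M)\to 0$ with $A$ then yields the identity of trace ideals, and the second assertion is simply cited from the standard dehomogenization facts in Bruns--Herzog and Bruns--Vetter. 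You instead factor through the localization $R_x\cong A[x,x^{-1}]$, use flatness to get $\tr_R(M)R_x=\tr_{R_x}(M_x)$, and then exploit the graded decomposition of $\Hom_{R_x}(M_x,R_x)$ (legitimate since $M_x$ is finitely generated) to see that the trace is extended from $(R_x)_0\cong A$; your second paragraph essentially re-proves the cited dehomogenization lemma via the embedding $R/I\hookrightarrow (A/IA)[x,x^{-1}]$, for which you do need (and correctly note) that $\tr_R(M)$ is homogeneous. The paper's Ext argument is shorter and avoids the structure theory of $R_x$ altogether; your argument is longer but more self-contained, making explicit both the identification $A\cong(R_x)_0$, $M\otimes_R A\cong (M_x)_0$ and the transfer of the prime/primary/radical properties along localization at a non-zerodivisor and Laurent extension. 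Both arguments use the finite generation and gradedness of $M$ in an essential way, and both are sound.
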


\begin{proof}
Put $y=1-x$.
Since the degree-zero component of $y$ is $1$, $y$ is not contained in any homogeneous prime ideal of $R$.
By \cite[Lemma 1.5.6(b)(ii)]{BH}, $y$ is a non-zerodivisor on every finitely generated graded $R$-module.
In particular, $y$ is a non-zerodivisor on $R$ and $\Ext_R^1(M,R)$.
The short exact sequence $0\to R\xrightarrow{y} R\to A\to 0$ induces a long exact sequence
$$
\Hom_R(M,R) \xrightarrow{y} \Hom_R(M,R) \to \Hom_R(M,A) \to \Ext_R^1(M,R) \xrightarrow{y} \Ext_R^1(M,R),
$$
where the last homomorphism is injective.
This gives a canonical isomorphism
$$
\Hom_R(M,R)\otimes_R A \cong \Hom_R(M,A) \cong \Hom_A(M\otimes_R A,A)
$$
Applying $(-)\otimes_R A$ to the natural exact sequence $\Hom_R(M, R) \otimes_R M\to R\to R/\tr_R(M) \to 0$ yields the natural exact sequence $\Hom_A(M\otimes_R A,A) \otimes_A (M\otimes_R A) \to A \to A/\tr_A(M\otimes_R A) \to 0$, which means $\tr_R (M) A= \tr_A(M\otimes_R A)$.
The latter assertion follows from the basic properties of dehomogenization; see \cite[Exercise 1.5.26(c)]{BH} or \cite[Proposition 16.26(c)]{BV} for instance.
\end{proof}

Applying this result to the canonical trace, we obtain the following.
Although we do not directly use the latter statement, it hints at the preservation of the CTR property under dehomogenization.

\begin{cor}\label{dehom cor}
Let $R=\bigoplus_{i\ge 0} R_n$ be a Cohen--Macaulay graded ring with graded canonical $R$-module $\w$, $x\in R_1$ a non-nilpotent element, and $A=R/(1-x)$.
Then $\tr_R (\w) A$ is the trace ideal of a canonical module $\w\otimes_R A$ of $A$.
In addition, suppose that $x$ is a non-zerodivisor on $R/\tr_R (\w)$.
Then $R$ is CTR if and only if $A$ is CTR.
\end{cor}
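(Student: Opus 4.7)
The plan is to leverage Proposition~\ref{dehom ippan} with $M = \w$, after two routine setup steps.

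First, I would verify that $\w \otimes_R A$ is indeed a canonical module of $A$. Setting $y = 1-x$, the proof of Proposition~\ref{dehom ippan} already records that $y$ is a non-zerodivisor on every finitely generated graded $R$-module, in particular on $\w$. Combined with $R$ being Cohen--Macaulay, this makes $A = R/yR$ Cohen--Macaulay and---by the standard behaviour of canonical modules modulo a regular element, e.g., \cite[Theorem 3.3.5]{BH}---identifies $\w/y\w \cong \w \otimes_R A$ with a canonical module of $A$. Plugging $M = \w$ into Proposition~\ref{dehom ippan} then yields $\tr_R(\w) A = \tr_A(\w \otimes_R A)$, which, by the well-definedness of the canonical trace recorded in Remark~\ref{cano trace no remark}(1), proves the first assertion.

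For the second assertion, I would first translate the CTR condition into a single global statement. Since $R$ is Cohen--Macaulay with canonical module $\w$, compatibility of trace with localization gives $\tr_{R_\p}(\w_{R_\p}) = \tr_R(\w) R_\p$ for each prime $\p$ of $R$, so $R$ is CTR if and only if $\tr_R(\w)$ is a radical ideal of $R$ (radicality being a local property). Applying the same reformulation to $A$ and $\w \otimes_R A$ reduces the desired equivalence to the statement that $\tr_R(\w)$ is radical in $R$ if and only if $\tr_A(\w \otimes_R A)$ is radical in $A$. Under the additional hypothesis that $x$ is a non-zerodivisor on $R/\tr_R(\w)$, this is precisely the ``in particular'' clause of Proposition~\ref{dehom ippan}, and the proof is complete.

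The only conceptual step is the reformulation of CTR from a pointwise local condition to the single global radicality of the canonical trace; once this is in hand, the corollary is a direct application of Proposition~\ref{dehom ippan}, and no serious obstacle is expected.
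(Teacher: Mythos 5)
Your proposal is correct and follows essentially the same route as the paper: establish that $1-x$ is a non-zerodivisor on $\w$ so that $\w\otimes_R A$ is a canonical module of $A$, then invoke Proposition~\ref{dehom ippan} for both the identification of the trace and the transfer of radicality. The only difference is that you spell out the (standard, and correct) reformulation of CTR as global radicality of the canonical trace, which the paper leaves implicit in ``the conclusion follows immediately.''
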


\begin{proof}
Following the proof of Proposition \ref{dehom ippan}, it follows that $1-x$ is a non-zerodivisor on $\w$.
So $\w\otimes_R A$ is a canonical module of $A$.
The conclusion follows immediately from Proposition \ref{dehom ippan}.
\end{proof}

\section{Schubert cycles}

CTR property on Schubert cycles over a field was completely characterized in terms of the canonical class by Miyazaki \cite{Miy}.
On the other hand, it is known that under a base change of the corresponding ASL, many standard properties of the base ring are preserved; see \cite{BV}.
From this perspective, one may expect that the CTR property exhibits a similar behavior under base change.
Furthermore, since the canonical trace characterizes the Gorenstein locus, knowing the canonical trace of a Schubert cycle allows one to determine its Gorenstein locus using combinatorial data. In this section, we address these issues.
Here we record the standard notation used in this paper.
Our conventions, as well as the basic facts on Schubert cycles and determinantal rings that we use without further explanation, basically follow \cite{BV}.

\begin{conv}\label{conv of Schubert cycle}
Let $B$ a ring, and $X=(X_{i j})$ an $m\times n$-matrix of indeterminates with $m\le n$.

(1) For integers $1\le a_1< a_2< \cdots< a_{m-1} <a_m\le n$, we denote by $[a_1, \ldots, a_m]$ the $m$-minor of $X$ corresponding to the columns $a_1, \ldots, a_m$.
Let $\G(X)$ denote the set of all $m$-minors of $X$, and let $G(X)$ (or $G_B(X)$ when we wish to make the base ring explicit) be the $B$-subalgebra of the polynomial ring $B[X]$ generated by $\G(X)$; that is,
$$
\G(X):=\{[a_1, \ldots, a_m] \mid  1\le a_1<\cdots<a_m\le n \} \ {\rm and}\ \G(X):=B[\g \mid \g\in\G(X)].
$$
Then $\G(X)$ is ordered partially in the following way:
$$
[a_1, \ldots, a_m]\le [b_1, \ldots, b_m] \iff a_i\le b_i \ {\rm for\ all} \ 1\le i\le m.
$$
The operations $\sqcap$ and $\sqcup$ given by 
\begin{align*}
& [a_1, \ldots, a_m]\sqcap [b_1, \ldots, b_m] = [{\rm min}\{a_1, b_1\}, \ldots, {\rm min}\{a_m, b_m\}]\ {\rm and }\\
& [a_1, \ldots, a_m]\sqcup [b_1, \ldots, b_m] = [{\rm max}\{a_1, b_1\}, \ldots, {\rm max}\{a_m, b_m\}].
\end{align*}
For each $\g\in \G(X)$, we adopt the notation below:
\begin{itemize}
\item $J(X; \g)$ is the ideal of $G(X)$ generated by $\{\d \in \G(X) \mid \d \ngeq \g \}$;
\item $G(X; \g)$ is the quotient of $G(X)$ by the ideal $J(X; \g)$;
\item $\G(X; \g):=\{\d \in \G(X) \mid \d \ge \g \}$ is the subset of $\G(X)$;
\item For $\d \in \G(X; \g)$, $J(x; \d):= J(X; \d)/J(X; \g)$ is the ideal of $G(X; \g)$.
\end{itemize}
Note that $\d\sqcap\d', \d\sqcup\d'\in \G(X; \g)$ for any $\d, \d'\in \G(X; \g)$.
Similarly, when we wish to make the base ring explicit, we may write $J_B(X; \g), G_B(X; \g)$, and so on.
If all $\d\in \G(X; \g)$ are assigned degree 1, $G_B(X; \g)$ is a graded algebra with straightening law (ASL for short) over $B$.
In particular, if the poset $\G(X; \g)$ is totally ordered, then $G(X; \g)$ is a polynomial ring over $B$; this situation is sometimes excluded as a trivial case.
In this paper, following \cite{BV}, we call these rings $G(X; \g)$ \textit{Schubert cycles}; see \cite[Section 1.D. and Theorem 5.4]{BV} for instance.

(2) For this part, we adopt the notation of \cite{Miy}.
Fix $\g=[a_1, \ldots, a_m] \in \G(X)$.
We divide into \textit{blocks} and \textit{gaps} (in the sense of \cite[Section 6]{BV}).
Let $k(0):=0$ and $a_{m+1}:=n+1$.
We put 
\begin{itemize}
\item $\b_i := a_{k(i)+1}, a_{k(i)+2}, \ldots, a_{k(i+1)-1}, a_{k(i+1)}$ for each $0\le i\le t$ and 
\item $\b_{t+1} := a_{k(t+1)+1}, a_{k(t+1)+2}, \ldots, n-1, n$
\end{itemize}
so that the following conditions are satisfied:
\begin{itemize}
\item $a_{j+1}-a_j=1$ if $k(i)<j<k(i+1)$ for some $0\le i\le t+1$;
\item $a_{k(i)+1}-a_{k(i)}>1$ for each $1\le i\le t+1$.
\end{itemize}
In this case, for any $0\le i\le t$, we set
$$
\chi_i:= a_{k(i+1)}+1, a_{k(i+1)}+2, \ldots, a_{k(i+1)+1}-2, a_{k(i+1)+1}-1,
$$
that is, $\b_0, \chi_0, \b_1, \chi_1,\ldots, \b_t, \chi_t, \b_{t+1}$ form a consecutive sequence of numbers from $a_1$ to $n$.
The sequences $b_0, \ldots, b_{t+1}, \chi_0, \ldots, \chi_t$ may be regarded as sets when convenient.
Note that $\b_{t+1}$ may be empty, and this occurs if and only if $a_m<n$.
Under the above notation, we set 
$$
\zeta_i:=[\b_0, \b_1, \ldots, \b_{i-1}, a_{k(i)+1}, a_{k(i)+2}, \ldots, a_{k(i+1)-1},  a_{k(i+1)}+1, \b_{i+1}, \ldots, \b_{t+1}]
$$
for $0\le i\le t$.
(This is obtained from $\g$ by replacing $a_{k(i+1)}$ with $a_{k(i+1)}+1$.)
We put 
$$
\k_i:=\sum_{j=0}^i |\b_j| + \sum_{j=i}^t |\chi_j|
$$
for each $0\le i\le t$, $\k:=\max\{ \k_i \mid 0\le i\le t\}$ and $\k':=\min\{ \k_i \mid 0\le i\le t\}$, where the symbol $|-|$ denotes the cardinality of a set.
If $B$ is a Gorenstein normal domain, then the class of the canonical module is
$$
\sum_{i=0}^t \k_i \cl (J(x; \zeta_i))
$$
in the divisor class group of $G_B(X; \g)$.
Finally, for $1\le i\le t$, we set 
$$
\sigma_i:=[\b_0, \ldots, \b_{i-2}, a_{k(i-1)+1}, a_{k(i-1)+2}, \ldots, a_{k(i)-1}, a_{k(i)+1}, a_{k(i)+2}, \ldots, a_{k(i+1)},  a_{k(i+1)}+1, \b_{i+1}, \ldots, \b_{t+1}].
$$
Note that $J(x; \zeta_i)$ for $0\le i\le t$ and $J(x; \sigma_j)$ for $1\le j\le t$ are ideals of maximal minors in $G(X; \g)$ in the sense of \cite[Section 9.A]{BV}.
\end{conv}

\begin{rem}\label{hojotekina column degree}
Under Convention \ref{conv of Schubert cycle}, the polynomial ring $B[X]$ over $B$ has the structure of an $\mathbb{N}^n$-graded ring as follows:
for each $1\le i\le m$ and $1\le j\le n$, $\deg (X_{i j})=(\d_{1 j}, \d_{2 j}, \ldots, \d_{n j})\in \mathbb{N}^n$, where each component is defined by the Kronecker delta.
This grading depends only on the column indices and is independent of the row indices.
All element of $\G(X)$ are homogeneous, and $G(X)$ is a $\mathbb{N}^n$-graded subring of $B[X]$.
This grading on $G(X)$ induces a grading on $G(X; \g)$ for any $\g\in\G(X)$.
This notion is required in the proof of Proposition \ref{general case of cano trace}, in which the following facts are used: let $\xi, \nu\in \G(X)$.
\begin{itemize}
\item $\xi=\nu$ if and only if $\deg(\xi)=\deg(\nu)$;
\item the straightening relations can be written in the form $\xi\nu=\sum_{i} c_i \alpha_i \beta_i$ with $0\ne c_i\in B$, $\alpha_i\le \beta_i$, $\alpha_i\le \xi$, $\alpha_i\le\nu$ and $\deg(\xi)+\deg(\nu)=\deg(\alpha_i)+\deg(\beta_i)$ for every $i$;
\item one has the equality $\deg(\xi)+\deg(\nu)=\deg(\xi\sqcup \nu)+\deg(\xi\sqcap \nu)$.
\end{itemize}
\end{rem}

Not only the result but also the method of proof in \cite[Theorem 4.3]{Miy} is noteworthy.
We show that by applying the arguments used there, one can describe the canonical trace of a Schubert cycle in terms of the poset structure of the ASL.
This makes it possible to determine the Gorenstein locus over a general base ring.
It should be noted that the description of the canonical trace of Schubert cycles was not explicitly given in previous studies, and the proof of the results was carried out by the author; however, Mitsuhiro Miyazaki was also aware of the same approach. 
The author gratefully acknowledges his permission to include these results in this paper.
Before stating Proposition \ref{general case of cano trace}, we summarize here a convention that will be used throughout later sections.

\begin{conv}\label{conv of Schubert cycle2}
Here, the notation follows Convention \ref{conv of Schubert cycle}(2).
For each integer $1\le h\le \k-\k'$, we set 
\begin{itemize}
\item $\S_h=\{i \mid \k-\k_i\ge h\}$ and $\T_h=\{i \mid \k-\k_i< h\}$;
\item $\U_h^{+}=\{i \mid i \in \S_h, i-1\in \T_h\},\ \U_h^{-}=\{i \mid i \in \T_h, i-1\in \S_h\}$, and $\U_h=\U_h^{+}\cup \U_h^{-}$;
\item $\U=\bigcup_{h=1}^{\k-\k'} \U_h$.
\end{itemize}
For any $1\le h\le \k-\k'$, $\S_h$ and $\T_h$ are disjoint subsets of $\{0, 1,\ldots, t\}$, and their union equals $\{0, 1,\ldots, t\}$.
Also, $\U_h$ is a non-empty subset of $\{1,\ldots, t\}$ for every $1\le h\le \k-\k'$.
\end{conv}

\begin{prop}\label{general case of cano trace}
Under Conventions \ref{conv of Schubert cycle} and \ref{conv of Schubert cycle2}, we assume that $B$ is a Gorenstein normal domain.
Then the canonical trace of $G_B(X; \g)$ is 
$$\prod_{h=1}^{\k-\k'}\Bigl( \bigcap_{i\in\U_h} J_B(x;\sigma_i) \Bigr).$$
In particular, the minimal elements of the Gorenstein locus $G_B(X; \g)$ are exactly $J_B(x;\sigma_i)$ with $i\in\U$.
\end{prop}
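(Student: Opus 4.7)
The plan is to follow the strategy of Miyazaki \cite[Theorem 4.3]{Miy}, extending it from the field case to a general Gorenstein normal base $B$. Set $R := G_B(X;\g)$ and $P_i := J_B(x;\zeta_i)$ for $0\le i\le t$. Since $R$ is a Cohen--Macaulay normal domain (by standard ASL theory over $B$), its canonical module $\w$ can be realized as a divisorial ideal of $R$, and by Convention \ref{conv of Schubert cycle} its divisor class is $\sum_{i=0}^t \k_i\cl(P_i)$. After absorbing a principal multiple of $\sum_i \cl(P_i)$, one may normalize so that the class of $\w$ becomes $\sum_i (\k_i-\k')\cl(P_i)$, and a representative can be taken as $\w = \bigcap_{i=0}^t P_i^{(\k_i-\k')}$. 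By the layer-cake identity $\k_i-\k' = \#\{h : i\in\S_h\}$, this equals the divisorial product $\prod_{h=1}^{\k-\k'} L_h$ with $L_h := \bigcap_{i\in\S_h} P_i$.

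Next, I would compute the trace layer by layer, aiming to show $\tr_R(L_h) = \bigcap_{i\in\U_h} J_B(x;\sigma_i)$ for each $h$. Writing $\tr_R(L_h) = L_h\cdot L_h^{-1}$ in the fraction field and using the vanishing $\sum_i \cl(P_i)=0$ to identify $L_h^{-1}$ (up to a principal multiple) with $\bigcap_{j\in\T_h} P_j$, the trace becomes a product of divisorial ideals that can be computed inside $R$. Using the $\mathbb{N}^n$-grading and the controlled form of the straightening relations of Remark \ref{hojotekina column degree}, one verifies that this product equals $\bigcap_{i\in\U_h} J_B(x;\sigma_i)$, with the boundary indices $i\in\U_h$ corresponding exactly to the combinatorial transitions between $\S_h$ and $\T_h$ along the poset $\G(X;\g)$; the description of $\sigma_i$ as obtained from $\zeta_{i-1}$ and $\zeta_i$ by the prescribed block shift makes this identification explicit.

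For the global statement, one invokes multiplicativity of the trace over layers, $\tr_R(\w) = \prod_h \tr_R(L_h)$, which follows from the identity $(IJ)^{-1} = I^{-1}J^{-1}$ for divisorial ideals in a normal domain. The assertion on the minimal primes of the non-Gorenstein locus then follows from Remark \ref{cano trace no remark}(2), together with the facts that each $J_B(x;\sigma_i)$ is a prime ideal of $R$ (being an ideal of maximal minors in an ASL, cf.\ \cite[Section 9.A]{BV}) and that these primes are pairwise incomparable as $\sigma_i$ varies over $\U$.

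The hard part will be the layer-trace computation in step two: verifying that $L_h\cdot L_h^{-1}$ equals $\bigcap_{i\in\U_h} J_B(x;\sigma_i)$ as ideals of $R$. This requires a careful use of the straightening laws together with the $\sqcap/\sqcup$ structure on $\G(X;\g)$ to produce generators of $L_h\cdot L_h^{-1}$ and to rule out spurious contributions by comparing multidegrees under the $\mathbb{N}^n$-grading, as well as the explicit combinatorial identification of $\sigma_i$ with the transitions in the block--gap structure of $\g$. The presence of a general base ring $B$ enters only formally: the flatness of the ASL construction over $B$ (cf.\ \cite[Sections 5 and 6]{BV}) together with Proposition \ref{dehom ippan} reduces the key verifications to the field case where Miyazaki's analysis applies.
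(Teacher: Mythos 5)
Your high-level plan -- realize $\w$ as an intersection of symbolic powers, slice it into layer ideals $L_h$, compute the trace per layer, and then multiply -- is indeed the shape of the argument the paper uses, and you have correctly located the combinatorial ingredients ($\sigma_i$, the transitions $\U_h$, the $\mathbb{N}^n$-grading and the $\sqcap/\sqcup$-structure). But there is a genuine gap at the step where you claim
\[
\tr_R(\w) \;=\; \prod_{h} \tr_R(L_h),
\]
``which follows from the identity $(IJ)^{-1}=I^{-1}J^{-1}$ for divisorial ideals in a normal domain.'' That identity is false in general: for divisorial fractional ideals one only has $(IJ)^{-1} = \bigl(I^{-1}J^{-1}\bigr)^{**}$, and the ordinary product $I^{-1}J^{-1}$ need not be reflexive. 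Consequently the trace is \emph{not} multiplicative -- e.g.\ in $k[[x,y,z]]/(xy-z^2)$ one has $\tr(\p)=\m$ for $\p=(x,z)$ but $\tr(\p^2)=\tr((x))=R\neq\m^2$. In the Schubert cycle situation multiplicativity does turn out to hold, but this is precisely the content that needs proof; it is not a formal consequence of divisor arithmetic. The paper earns this by an explicit ASL computation: using {\rm [BV, Corollary 9.3]} to control standard representations of elements in $\bigcap_i J_i^{\k-\k_i}$ and $\bigcap_i J_i^{\k_i-\k'}$, then establishing that each factor of the resulting product of generators collapses via the straightening law to $\pm\g(\xi\sqcup\nu)$ with $\xi\sqcap\nu=\g$, and finally proving the combinatorial identity
\[
\bigcap_{k\in\U_h}\Theta_k \;=\;\{\xi\sqcup\nu \mid \xi\in\Omega_i,\ \nu\in\Omega_j \ \text{for all}\ i\in\S_h,\ j\in\T_h\},
\]
which is the hardest part of the proof and which you only gesture at. Without that step your argument proves only the containment $\prod_h\tr_R(L_h)\subseteq\tr_R(\w)$.

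Two further, smaller issues. First, your ``layer-cake'' bookkeeping is off by a swap of roles: with $\S_h=\{i\mid\k-\k_i\ge h\}$ one has $\#\{h: i\in\S_h\}=\k-\k_i$, not $\k_i-\k'$; the count $\k_i-\k'$ corresponds to $\T_h$. So if you want the representative $\bigcap_i J_i^{\k_i-\k'}$ you must take $L_h=\bigcap_{i\in\T_h}J_i$, not $\bigcap_{i\in\S_h}J_i$. Second, the closing remark -- that flatness plus Proposition~\ref{dehom ippan} reduces the verification to the field case -- is off target. Proposition~\ref{dehom ippan} concerns dehomogenization and is used for determinantal rings, not here; moreover the logical order in the paper is the opposite of what you suggest: the explicit trace description is proved directly over a general Gorenstein normal domain and only then is base change (to/from $\Z$, via Lemma~\ref{HHS not field}) used for the later stability theorem. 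The only place the paper argues by base change within this proposition is the narrow point of showing the coefficient in $\xi\nu=c\g(\xi\sqcup\nu)$ is $\pm1$, by first working over $\Z$ and reducing modulo $p$.
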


\begin{proof}
We put $R=G_B(X; \g)$, $\Omega_i=\{\d \in \G(X; \g) \mid \d \ngeq \zeta_i \}$ and $J_i=J_B(x;\zeta_i)=R\Omega_i$ for $0\le i\le t$.
It follows from \cite[Corollary 9.18]{BV} that $J_i^k=J_i^{(k)}$ for any $0\le i\le t$ and $k>0$.
So, the ideal $\w:=\bigcap_{i=0}^t J_i^{\k_i}$ is a canonical module of $R$ by \cite[Theorem 8.12]{BV}.
We have $\g R=\bigcap_{i=0}^t J_i$ by \cite[Corollary 6.5]{BV}.
One has $\w=\g^{\k'} \bigcap_{i=0}^t J_i^{\k_i-\k'}$ and $\g^{\k} \w^{-1} = \bigcap_{i=0}^t J_i^{\k-\k_i}$ since they are divisorial ideals with the same divisor class.
This implies 
$$
\g^{\k}\tr_R(\w)=(\g^{\k}\w^{-1})\w =\Bigl( \bigcap_{i=0}^t J_i^{\k-\k_i}\Bigr)\Bigl(\g^{\k'} \bigcap_{i=0}^t J_i^{\k_i-\k'}\Bigr)=\g^{\k'} \Bigl( \bigcap_{i=0}^t J_i^{\k-\k_i}\Bigr)\Bigl(\bigcap_{i=0}^t J_i^{\k_i-\k'}\Bigr),
$$
which means $\g^{\k-\k'}\tr_R(\w)=( \bigcap_{i=0}^t J_i^{\k-\k_i})(\bigcap_{i=0}^t J_i^{\k_i-\k'})$ as $\g$ is a non-zerodivisor on $R$.

Let $\alpha=\sum_{p=1}^s c_p \xi_{p\, 1} \cdots \xi_{p\, r_p}\in R$ be a standard representation, that is, $0\ne c_p \in B$ and $\xi_{p\, 1}, \ldots, \xi_{p\, r_p} \in \G(X;\g)$ with $\xi_{p\, 1}\le \cdots \le \xi_{p\, r_p}$ for any $1\le p \le s$.
For $N>0$ and $0\le i\le t$, if $\alpha$ belongs to $J_i^{N}$, then $r_p\ge N$ and $\xi_{p\, 1}, \ldots, \xi_{p\, N}\in \Omega_i$ for all $1\le p \le s$; see \cite[Corollary 9.3]{BV}.
By this, if $\alpha$ belongs to $\bigcap_{i=0}^t J_i^{\k-\k_i}$, then $r_p\ge \k-\k'$ and $\xi_{p\, 1}, \ldots, \xi_{p\, \k-\k_i}\in \Omega_i$ for all $1\le p\le s$ and $0\le i\le t$, which implies that $\xi_{p\, h}\in \Omega_i$ for all $1\le h\le \k-\k'$ and $i\in \S_h$.
We see that the ideal $\bigcap_{i=0}^t J_i^{\k-\k_i}$ is generated by 
$$
\{ \xi_1\cdots\xi_{\k-\k'} \mid \xi_1, \ldots, \xi_{\k-\k'} \in \G(X;\g), \xi_{h}\in \Omega_i \ {\rm for\ all} \ 1\le h\le \k-\k' \ {\rm and}\ i\in \S_h\}.
$$
Similarly, the ideal $\bigcap_{i=0}^t J_i^{\k_i-\k'}$ is generated by 
$$
\{ \xi_1\cdots\xi_{\k-\k'} \mid \xi_1, \ldots, \xi_{\k-\k'} \in \G(X;\g), \xi_{\k-\k'-h+1}\in \Omega_i \ {\rm for\ all} \ 1\le h\le \k-\k' \ {\rm and}\ i\in \T_h\}.
$$
Combining the above, it is seen that $\g^{\k-\k'}\tr_R(\w)$ is generated by the following elements:
\[
\left\{
\begin{array}{l|l}
 & \xi_1, \ldots, \xi_{\k-\k'}, \nu_1, \ldots, \nu_{\k-\k'} \in \G(X;\g), \\
\xi_1\cdots\xi_{\k-\k'} \cdot\nu_1\cdots\nu_{\k-\k'}
 & {\rm for\ all} \ 1\le h\le \k-\k', \, i\in \S_h \ {\rm and}\ j\in \T_h\\
 & \xi_{h}\in \Omega_i \ {\rm and}\ \nu_{\k-\k'-h+1}\in \Omega_j 
\end{array}
\right\}.
\]

Fix $1\le h\le \k-\k'$.
Let $\xi, \nu \in \G(X;\g)$ such that $\xi\in \Omega_i$ and $\nu\in \Omega_j$ for any $i\in \S_h$ and $j\in \T_h$.
Then the straightening relation of $\xi\nu$ is of the form $\pm \g(\xi \sqcup \nu)$.
Indeed, for any $\alpha\in \G(X;\g)$ with $\alpha\le \xi$ and $\alpha\le \nu$, we see that $\alpha\in \Omega_i\cap\Omega_j$ for any $i\in \S_h$ and $j\in\T_h$ since $\xi\in \Omega_i$, $\nu\in \Omega_j$, and $\Omega_i$ and $\Omega_j$ are poset ideals.
This means that $\alpha\in \Omega_i$ for any $i\in \S_h\cup \T_h=\{0,\ldots, t\}$, and thus $\alpha=\g$.
In particular, we get $\xi \sqcap \nu=\g$.
By Remark \ref{hojotekina column degree}, no terms other than $\g(\xi \sqcup \nu)$ appear in the straightening relation of $\xi\nu$.
When we write $\xi \nu=c\g (\xi \sqcup \nu)$, the coefficient $c$ is non-zero as $G_B(X;\g)$ is an integral domain.
Moreover, when $B=\Z$, considering the base change to any residue field $\Z/p \Z$, the image of $c$ in $\Z/p \Z$ must be non-zero. 
Hence $c=\pm 1$, and the same holds for a general $B$ via base change from $\Z$. 
Combining this with the preceding paragraph, we obtain that $\g^{\k-\k'}\tr_R(\w)$ is generated by the following elements:
\[
\left\{
\begin{array}{l|l}
 & \xi_1, \ldots, \xi_{\k-\k'}, \nu_1, \ldots, \nu_{\k-\k'} \in \G(X;\g), \\
\g^{\k-\k'} \prod_{h=1}^{\k-\k'} (\xi_{h} \sqcup \nu_{\k-\k'-h+1})
 & {\rm for\ all} \ 1\le h\le \k-\k', \, i\in \S_h \ {\rm and}\ j\in \T_h\\
 & \xi_{h}\in \Omega_i \ {\rm and}\ \nu_{\k-\k'-h+1}\in \Omega_j 
\end{array}
\right\}.
\]
Since $\g$ is a non-zerodivisor on $R$, we get 
$$
\tr_R(\w)=\prod_{h=1}^{\k-\k'} R \{ \xi\sqcup \nu \mid \xi, \nu \in \G(X;\g), \, \xi\in \Omega_i \ {\rm and}\ \nu\in \Omega_j\ {\rm for\ all}\  i\in \S_h \ {\rm and}\ j\in \T_h \}.
$$

We set $\Theta_i=\{\d \in \G(X; \g) \mid \d \ngeq \sigma_i \}$ for $1\le i\le t$.
Since $\zeta_i, \zeta_{i-1}\le \sigma_i$, we obtain $\Omega_i, \Omega_{i-1}\subseteq \Theta_i$.
Fix $1\le h\le \k-\k'$.
Then $\bigcap_{i\in\U_h} J_B(x;\sigma_i)$ is generated by $\bigcap_{i\in\U_h} \Theta_i$; see \cite[Proposition 5.2]{BV}.
In view of the previous paragraph, it suffices to establish the following equality to complete the proof:
\begin{equation}\label{WTS equal}
\bigcap_{k\in\U_h} \Theta_k=\{ \xi\sqcup \nu \mid \xi, \nu \in \G(X;\g), \, \xi\in \Omega_i \ {\rm and}\ \nu\in \Omega_j\ {\rm for\ all}\  i\in \S_h \ {\rm and}\ j\in \T_h \}.
\end{equation}
Let $\xi, \nu \in \G(X;\g)$ with $\xi\in \Omega_i$ and $\nu\in \Omega_j$ for all $i\in \S_h$ and $j\in \T_h$.
If $k\in \U_h^{+}$, then $k\in \S_h$ and $k-1\in \T_h$.
We have $\xi\in \Omega_k$ and $\nu\in \Omega_{k-1}$, and thus $\xi, \nu \in \Theta_k$, which yields $\xi\sqcup \nu \in \Theta_k$.
On the other hand, if $k\in \U_h^{-}$, then $k-1\in \S_h$ and $k\in \T_h$.
We get $\xi\in \Omega_{k-1}$ and $\nu\in \Omega_{k}$, and hence $\xi, \nu \in \Theta_k$, which gives $\xi\sqcup \nu \in \Theta_k$.
This shows that the right-hand side of (\ref{WTS equal}) is contained in the left-hand side. 
For the converse, we take $\beta=[b_1,\ldots,b_m]\in \bigcap_{k\in\U_h}\Theta_k$, and put
\begin{align*}
& H_1 = \{ l \mid k(i)< l\le k(i+1) \ {\rm for \ some}\ i\in \S_h\};\\
& H_2 = \{ l \mid k(i)< l\le k(i+1) \ {\rm for \ some}\ i\in \T_h\}; \\
& H_3 = \{k(t+1)+1, k(t+1)+2, \ldots, m-1, m\};
\end{align*}
\begin{equation*}
c_l=
   \begin{cases}
      a_l \quad  {\rm if} \ l\in H_1;  \\
      b_l \quad  {\rm if} \ l\in H_2\cup H_3;
   \end{cases}
c'_l=
   \begin{cases}
      a_l \quad  {\rm if} \ l\in H_2; \\
      b_l \quad  {\rm if} \ l\in H_1\cup H_3.
   \end{cases}
\end{equation*}
Note that $H_1$, $H_2$, and $H_3$ are disjoint, and their union is $\{1,\ldots, m\}$.
We prove that $[c_1,\ldots, c_m]\in \Omega_i$ for all $i\in \S_h$. 
To show that $[c_1,\ldots, c_m]$ belongs to $\G(X; \g)$, we verify that $c_{l-1}<c_l$ holds for every $1\le l\le m$.
The non-trivial case is when $l-1 \notin H_1$ and $l \in H_1$.
Then $l-1 \in H_2$ and $l \in H_1$ (since the condition $l-1 \in H_3$ would imply $l \in H_3$).
We see that $l-1=k(i+1)$ for some $i\in \T_h$ with $i<t$ and $i+1\in \S_h$.
As $i+1\in \U_h^{+}\subseteq \U_h$, $\beta$ is in $\Theta_{i+1}$.
By this, $c_{l-1}=b_{k(i+1)}<a_{k(i+1)+1}=c_l$ holds.
For any $i\in \S_h$, we obtain $[c_1,\ldots, c_m] \ngeq\zeta_i$ since $c_{k(i+1)}=a_{k(i+1)}$.
This means $[c_1,\ldots, c_m]\in \Omega_i$ for every $i\in \S_h$.
Dually, one also sees that $[c'_1,\ldots, c'_m]\in \Omega_j$ for all $j\in \T_h$.
Since $\beta=[c_1,\ldots, c_m]\sqcup[c'_1,\ldots, c'_m]$, the proof is now completed.
\end{proof}

From the above, we obtain one of the main results of this section.
The elements $\sigma_1, \ldots, \sigma_t$ in Convention \ref{conv of Schubert cycle} were those that characterize the singular locus of the Schubert cycle; see \cite[Theorem 6.7]{BV}.
Theorem \ref{gor locus of Sch. cycle} asserts that some of these elements exactly determines the non-Gorenstein locus.
Before the theorem, we note that the condition $i\in \U$ is equivalent to $\k_i\ne \k_{i-1}$.

\begin{thm}\label{gor locus of Sch. cycle}
Under Conventions \ref{conv of Schubert cycle} and \ref{conv of Schubert cycle2}, let $\p$ be a prime ideal of $G_B(X; \g)$ and $\q=\p\cap B$.
Then $G_B(X; \g)_\p$ is Gorenstein if and only if $B_\q$ is Gorenstein and $\p$ does not contain $J_B(x;\sigma_i)$ for any $i\in \U$. 
In particular, if $\mathfrak{a}$ is a defining ideal of the non-Gorenstein locus of $B$, then 
$$\Bigl( \bigcap_{i\in\U} J_B(x;\sigma_i) \Bigr) \cap \mathfrak{a}G_B(X; \g)$$
is a defining ideal of the non-Gorenstein locus of $G_B(X; \g)$.
\end{thm}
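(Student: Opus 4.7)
The plan is to reduce the general statement to the case of Proposition \ref{general case of cano trace} by a flat base change argument. As an ASL over $B$, the ring $G_B(X;\g)$ is a free $B$-module on the standard monomials, so the induced local homomorphism $B_\q \to G_B(X;\g)_\p$ is flat. By the standard flat-local criterion for the Gorenstein property, $G_B(X;\g)_\p$ is Gorenstein if and only if $B_\q$ is Gorenstein and the closed fiber $G_B(X;\g)_\p \otimes_{B_\q} \kappa(\q)$ is Gorenstein.

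By the compatibility of the ASL construction with base change, one has $G_B(X;\g) \otimes_B \kappa(\q) \cong G_{\kappa(\q)}(X;\g)$, and under this identification $J_B(x;\sigma_i)$ extends to $J_{\kappa(\q)}(x;\sigma_i)$. Writing $\bar\p$ for the image of $\p$, the closed fiber is thus the localization $G_{\kappa(\q)}(X;\g)_{\bar\p}$. Since $\kappa(\q)$ is a field, hence a Gorenstein normal domain, Proposition \ref{general case of cano trace} yields that the canonical trace of $G_{\kappa(\q)}(X;\g)$ equals $\prod_{h=1}^{\k-\k'}(\bigcap_{i\in\U_h} J_{\kappa(\q)}(x;\sigma_i))$. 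By Remark \ref{cano trace no remark}(2), the fiber is Gorenstein iff $\bar\p$ does not contain this trace. Since $\bar\p$ is prime, this holds iff $\bar\p$ avoids some factor $\bigcap_{i\in\U_h} J_{\kappa(\q)}(x;\sigma_i)$, and (using $V(\bigcap_i I_i) = \bigcup_i V(I_i)$) iff $\bar\p$ does not contain $J_{\kappa(\q)}(x;\sigma_i)$ for any $i\in\U$. Since $\q G_B(X;\g) \subseteq \p$, this is equivalent to $\p$ not containing $J_B(x;\sigma_i)$ for any $i\in\U$, proving the first assertion.

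For the second assertion, combining the first part with the fact that $B_\q$ is non-Gorenstein iff $\p \supseteq \mathfrak{a}G_B(X;\g)$, the non-Gorenstein locus of $G_B(X;\g)$ equals
\[
V(\mathfrak{a}G_B(X;\g)) \cup \bigcup_{i\in\U} V(J_B(x;\sigma_i)) = V\Bigl(\mathfrak{a}G_B(X;\g) \cap \bigcap_{i\in\U} J_B(x;\sigma_i)\Bigr),
\]
so the indicated intersection is a defining ideal.

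The only nontrivial ingredient is the base-change compatibility used at the start of the second paragraph: that $G_B(X;\g)$ is $B$-free and that the ideals $J_B(x;\sigma_i)$ commute with the base change $B\to\kappa(\q)$. Both are immediate from the ASL structure, as $G_B(X;\g) = G_\Z(X;\g)\otimes_\Z B$ with a canonical $\Z$-basis of standard monomials and the ideals in question are generated by (the images of) elements of the underlying poset. Once this is in hand, everything else is a routine combination of the flat-local Gorenstein criterion, Proposition \ref{general case of cano trace} applied over the residue field, and elementary bookkeeping with prime containment of intersections.
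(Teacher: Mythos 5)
Your argument is correct and takes essentially the same route as the paper: use flatness of $B_\q\to G_B(X;\g)_\p$ (from $B$-freeness of the ASL) to reduce to the closed fibre $G_{\kappa(\q)}(X;\g)_{\bar\p}$, then apply Proposition~\ref{general case of cano trace} over the field $\kappa(\q)$ together with Remark~\ref{cano trace no remark}(2). One local slip: where you write ``avoids \emph{some} factor $\bigcap_{i\in\U_h}J_{\kappa(\q)}(x;\sigma_i)$'' you need ``avoids \emph{every} factor'' --- for a prime $\bar\p$, failing to contain a finite product is equivalent to failing to contain each individual factor, and only that gives ``$\bar\p\not\supseteq J_{\kappa(\q)}(x;\sigma_i)$ for any $i\in\U$''; as stated your two intermediate biconditionals each hold in only one direction (avoiding one factor does not force avoidance of the product), though they happen to chain to the right endpoint.
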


\begin{proof}
Let $\p$ be a prime ideal of $R:=G_B(X; \g)$,  $\q=\p\cap B$ and $\k(\q)=B_\q/\q B_\q$.
Since $B_\q \to R_\p$ is flat and local, $R_\p$ is Gorenstein if and only if $B_\q$ and $R_\p/\q R_\p$ are Gorenstein.
Let $P$ be the prime ideal of $G_{\k(\q)}(X; \g)$, which appears in the natural isomorphism $R_\p/\q R_\p\cong (\k(\q)\otimes_B R)_P=G_{\k(\q)}(X; \g)_P$.
Then $\p$ does not contain $J_B(x;\sigma_i)$ for any $i\in \U$ if and only if $P$ does not contain $J_{\k(\q)}(x;\sigma_i)$ for any $i\in \U$.
By Proposition \ref{general case of cano trace}, the latter is equivalent to the condition that $R_\p/\q R_\p$ is Gorenstein as $\k(\q)$ is a field.
The latter assertion follows from the former.
\end{proof}

The following result was proved by Miyazaki in the case where $B$ is a field; see \cite[Theorem 4.3]{Miy}.
We give a precise proof for completeness, since the case $B=\mathbb{\Z}$ will be needed later.

\begin{cor}\label{miy main thm}
Under Convention \ref{conv of Schubert cycle}, we assume that $B$ is a Gorenstein normal domain.
Then $G_B(X; \g)$ is CTR if and only if $\k-\k'\le 1$.
In these situations, the canonical trace of $G_B(X; \g)$ is $\bigcap_{i\in I} J(x; \sigma_i)$, where $I=\{i\mid 1\le i\le t, \k_i\ne \k_{i-1}\}$.
\end{cor}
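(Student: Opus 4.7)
The plan is to deduce the corollary directly from Proposition \ref{general case of cano trace} by a case analysis on $\k-\k'$. Set $R=G_B(X;\g)$ and equip it with the ASL grading in which every element of $\G(X;\g)$ lies in $R_1$. The Proposition provides
$$
\tr_R(\w)=\prod_{h=1}^{\k-\k'}\Bigl(\bigcap_{i\in\U_h}J_B(x;\sigma_i)\Bigr),
$$
and since each $J_B(x;\sigma_i)$ is prime (as an ideal of maximal minors in a Schubert sub-ASL; see \cite[Section 9]{BV}), the radical of this product is $\bigcap_{i\in\U}J_B(x;\sigma_i)$.

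When $\k=\k'$, the empty product gives $\tr_R(\w)=R$, so $R$ is Gorenstein and CTR, and the empty intersection convention makes the asserted formula trivially hold. When $\k-\k'=1$, the product reduces to the single factor $\bigcap_{i\in\U_1}J_B(x;\sigma_i)$, automatically radical as an intersection of primes, so $R$ is CTR. To match this with the claimed $\bigcap_{i\in I}J(x;\sigma_i)$, I would unwind Convention \ref{conv of Schubert cycle2}: each $\k_i$ takes one of the two values $\k'$ and $\k$, so $\S_1$ and $\T_1$ partition $\{0,\ldots,t\}$ by which value is attained, and $\U_1=\U_1^+\cup\U_1^-$ picks out exactly the indices $i\in\{1,\ldots,t\}$ where $\k_i\ne\k_{i-1}$, giving $\U_1=I$.

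For the converse, assume $\k-\k'\ge 2$; the plan is to exhibit $\g$ itself as a witness to the failure of radicality. By the construction of $\sigma_i$ in Convention \ref{conv of Schubert cycle}, each $\sigma_i$ strictly dominates $\g$ in $\G(X)$, so $\g\not\ge\sigma_i$, making $\g$ a generator of every $J_B(x;\sigma_i)$; hence $\g\in\bigcap_{i\in\U}J_B(x;\sigma_i)=\sqrt{\tr_R(\w)}$. On the other hand, each factor $\bigcap_{i\in\U_h}J_B(x;\sigma_i)$ is contained in $R_{\ge 1}$, so the product $\tr_R(\w)$ lies in $R_{\ge \k-\k'}\subseteq R_{\ge 2}$, whereas $\g$ is a nonzero element of $R_1$. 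Therefore $\g\notin\tr_R(\w)$, so $\tr_R(\w)$ is not radical and $R$ is not CTR.

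The main obstacle is isolating $\g$ itself as the non-radicality witness in the ``only if'' direction; once this element is singled out, the argument collapses to a one-line degree bookkeeping combined with the defining property $\sigma_i>\g$ from Convention \ref{conv of Schubert cycle}, requiring no further combinatorial analysis and, in particular, working uniformly for any Gorenstein normal domain $B$ (including the case $B=\Z$ needed later).
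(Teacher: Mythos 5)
Your proposal is correct and follows essentially the same route as the paper: you deduce everything from Proposition \ref{general case of cano trace}, observe that a single intersection of primes is radical for the ``if'' direction, and for the ``only if'' direction use the exact same witness $\g$ — it is a degree-one ASL element lying in the radical of $\tr_R(\w)$ while $\tr_R(\w)$ itself sits in degrees $\ge\k-\k'\ge 2$. The only substantive addition you make is spelling out the identification $\U_1=I$ when $\k-\k'=1$, which the paper relegates to a one-line remark preceding Theorem \ref{gor locus of Sch. cycle}.
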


\begin{proof}
Proposition \ref{general case of cano trace} states that the canonical trace of $G_B(X; \g)$ is $\prod_{h=1}^{\k-\k'}( \bigcap_{i\in\U_h} J_B(x;\sigma_i))$.
Since all $J_B(x;\sigma_i)$ are prime ideals, the ``if'' part and the latter assertion follow immediately.
On the other hand, $\prod_{h=1}^{\k-\k'}( \bigcap_{i\in\U_h} J_B(x;\sigma_i))$ is generated by elements of degree $\k-\k'$ (as an ASL), and hence does not contain $\g$ if $\k-\k'\ge 2$.
However $\g$ belongs to $\bigcap_{i\in\U} J_B(x;\sigma_i)$, the radical of $\prod_{h=1}^{\k-\k'}( \bigcap_{i\in\U_h} J_B(x;\sigma_i))$.
The ``only if'' part has also been established.
\end{proof}

In the remainder of this section, our goal is to show Theorem \ref{main base change}, namely, that one can perform a base change taking this as the base case. 
For this purpose, we prepare the lemma below.
This is a generalization of \cite[Proposition 4.1]{HHS}, guided by the proof given there.

\begin{lem}\label{HHS not field}
Let $C$ be a ring, and let $A$ and $B$ be $C$-algebras.
Let $M$ be an $A$-module, and $R=A\otimes_C B$.
Suppose that $A$ and $M$ are flat over $C$.
Then the following hold.
\begin{enumerate}[\rm(1)]
\item $\Tor_i^R(M\otimes_A R, N\otimes_B R)=0$ for any $B$-module $N$ and any integer $i>0$.
\item If $M=A/\tr_A (L)$ for some $A$-module $L$, the following holds for any finitely generated $B$-module $N$:
$$
\tr_A(L) R\cap \tr_B(N) R=(\tr_A(L) R)\cdot(\tr_B(N) R) \subseteq \tr_R(L\otimes_C N) \subseteq \tr_B(N) R.
$$
\end{enumerate}
\end{lem}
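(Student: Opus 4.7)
The plan is to prove (1) by constructing a convenient resolution, then to deduce the chain of (in)equalities in (2) by combining (1) with the standard Tor identification of intersection-versus-product and a tensor-Hom adjunction argument.

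For (1), I would start from a free resolution $F_\bullet \to N$ of $N$ as a $B$-module. Since $A$ is $C$-flat, $R = A \otimes_C B$ is flat over $B$, so the base change $F_\bullet \otimes_B R \to N \otimes_B R$ is a free resolution over $R$. Computing Tor with it,
\[
\Tor_i^R(M \otimes_A R,\, N \otimes_B R) \;=\; H_i\bigl((M \otimes_A R) \otimes_R (F_\bullet \otimes_B R)\bigr).
\]
A short manipulation using $R = A \otimes_C B$ identifies the degree-$i$ term with $M \otimes_C F_i$ (via $F_i \otimes_B R \cong A \otimes_C F_i$ and $M \otimes_A (A \otimes_C F_i) \cong M \otimes_C F_i$). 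Since $M$ is $C$-flat, the complex $M \otimes_C F_\bullet$ remains a resolution of $M \otimes_C N$, so its higher homology vanishes.

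For (2), set $I = \tr_A(L) R$ and $J = \tr_B(N) R$. Since $R/I = M \otimes_A R$ and $R/J = (B/\tr_B(N)) \otimes_B R$, part (1) yields $\Tor_1^R(R/I, R/J) = 0$. Combined with the standard formula $\Tor_1^R(R/I, R/J) \cong (I \cap J)/IJ$ (obtained from the short exact sequence $0 \to I \to R \to R/I \to 0$), this gives the first equality $I \cap J = IJ$. For the middle inclusion $IJ \subseteq \tr_R(L \otimes_C N)$, each pair of linear forms $f \in \Hom_A(L, A)$ and $g \in \Hom_B(N, B)$ tensors to an $R$-linear map $f \otimes_C g \colon L \otimes_C N \to A \otimes_C B = R$ whose image contains $r \cdot f(\ell) g(n)$ for every $r \in R$, $\ell \in L$, $n \in N$; summing over $(f,g)$ recovers $\tr_A(L) \tr_B(N) R = IJ$. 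For the final inclusion $\tr_R(L \otimes_C N) \subseteq J$, I would use the identification $L \otimes_C N \cong L \otimes_A (N \otimes_B R)$ together with the tensor-Hom adjunction
\[
\Hom_R\bigl(L \otimes_A (N \otimes_B R),\, R\bigr) \;\cong\; \Hom_A\bigl(L,\, \Hom_R(N \otimes_B R, R)\bigr),
\]
so that for any $\phi$ on the left and any $\ell \in L$, the image of $\tilde\phi(\ell) \in \Hom_R(N \otimes_B R, R)$ lies in $\tr_R(N \otimes_B R)$; since $R$ is flat over $B$ and $N$ is finitely generated, the base-change property of the trace ideal recalled at the start of this section gives $\tr_R(N \otimes_B R) = \tr_B(N) R = J$.

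The main obstacle is the reduction in (1) from Tor over the larger ring $R$ to homology over the base $C$; once this identification is in place, the equality $I \cap J = IJ$ in (2) is purely formal, and the two inclusions flanking $\tr_R(L \otimes_C N)$ are routine tensor-Hom manipulations.
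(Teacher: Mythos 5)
Your proof is correct and follows essentially the same route as the paper: both prove (1) by base-changing a free resolution (you resolve $N$ over $B$, the paper resolves $M$ over $A$ — a symmetric variant that uses the same two flatness hypotheses) and deduce (2) from $\Tor_1 = 0$, the elementary product-of-traces map, and the base-change isomorphism for $\Hom_B(N,B)$ over the flat extension $B \to R$. The only cosmetic difference in (2) is that you phrase the last inclusion via tensor-Hom adjunction, whereas the paper constructs the auxiliary $R$-map $A \otimes_C N \to L \otimes_C N$ and then invokes the same finitely-generated-plus-flat $\Hom$-base-change fact.
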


\begin{proof}
(1) Let $F$ be an $A$-free resolution of $M$.
Since $A$ and $M$ are flat over $C$, $F\otimes_C B$ is an $R$-free resolution of $M\otimes_C B\cong M\otimes_A R$.
Then $(F\otimes_C B) \otimes_R (R\otimes_B N)\cong F\otimes_C N$ for any $B$-module $N$.
As $F$ is also a $C$-flat resolution of $M$ and $M$ is flat over $C$, we have 
$\Tor_i^R(M\otimes_A R, N\otimes_B R)=H_i((F\otimes_C B) \otimes_R (R\otimes_B N))\cong H_i(F\otimes_C N)=\Tor_i^C(M,N)=0$ for any $i>0$.

(2) By (1), we have $\Tor_1^R(R/\tr_A(L) R, R/\tr_B(N) R)=0$ for any $B$-module $N$, which means $\tr_A(L) R\cap \tr_B(N) R=(\tr_A(L) R)\cdot(\tr_B(N) R)$.
For an $A$-homomorphism $f:L\to R$ and a $B$-homomorphism $f:N\to R$, the tensor product $f\otimes g: L\otimes_C N \to A\otimes_C B=R$ is an $R$-homomorphism.
This induces an inclusion $(\tr_A(L) R)\cdot(\tr_B(N) R) \subseteq \tr_R(L\otimes_C N)$.
To prove the final inclusion, let $\Phi: L\otimes_C N \to A\otimes_C B=R$ be an $R$-homomorphism, $x\in L$, and $y\in N$.
We consider an $R$-homomorphism $F:A\otimes_C N \to L\otimes_C N: a\otimes z\mapsto (ax)\otimes z$.
Then $\Phi(x\otimes y)=(\Phi\circ F)(1\otimes y)\in \image (\Phi\circ F)$.
On the other hand, since $R=A\otimes_C B$ is flat over $B$ and $N$ is finitely generated over $B$, we obtain the canonical isomorphism
$$
\Phi\circ F \in \Hom_R(A\otimes_C N, A\otimes_C B)\cong \Hom_R(R\otimes_B N, R\otimes_B B)\cong R \otimes_B \Hom_B(N,B)
$$
by \cite[Theorem 7.11]{Mat}.
Hence $\Phi(x\otimes y)\in \image (\Phi\circ F)\subseteq \tr_B(N) R$.
\end{proof}

The theorem below is another main result of this section.

\begin{thm}\label{main base change}
Under Convention \ref{conv of Schubert cycle}, we assume that $B$ is a reduced Cohen--Macaulay ring with canonical module $\w_B$.
Then $G_B(X; \g)$ is CTR if and only if $B$ is CTR and $\k-\k'\le 1$.
In these situations, let $I=\{i\mid 1\le i\le t, \k_i\ne \k_{i-1}\}$.
Then the canonical trace of $G_B(X; \g)$ is 
$$\Bigl(\bigcap_{i\in I} J_B(x; \sigma_i)\Bigr) \cap (\tr_B (\w_B)G_B(X; \g)).$$
\end{thm}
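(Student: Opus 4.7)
The plan is to set $A = G_{\mathbb{Z}}(X;\g)$, view $R = G_B(X;\g)$ as the base change $A \otimes_{\mathbb{Z}} B$, and take $\w_R = \w_A \otimes_{\mathbb{Z}} \w_B$ as a canonical module of $R$ via the standard tensor construction over the Gorenstein base $\mathbb{Z}$. The main computation is to apply Lemma \ref{HHS not field} with $C = \mathbb{Z}$, $L = \w_A$, and $N = \w_B$. The required flatness hypothesis, that $A$ and $M = A/\tr_A(\w_A)$ are $\mathbb{Z}$-flat, holds because $A$ is a $\mathbb{Z}$-free ASL and, by Proposition \ref{general case of cano trace}, $\tr_A(\w_A)$ is a product of intersections of the poset-type ideals $J_{\mathbb{Z}}(x;\sigma_i)$, so the standard-monomial basis of $A$ descends to a $\mathbb{Z}$-basis of $M$. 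The lemma then yields
\[
(\tr_A(\w_A) R) \cap (\tr_B(\w_B) R) \;=\; (\tr_A(\w_A) R)\cdot(\tr_B(\w_B) R) \;\subseteq\; \tr_R(\w_R) \;\subseteq\; \tr_B(\w_B) R.
\]

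For the \emph{if} direction, assume $B$ is CTR and $\k - \k' \le 1$. Corollary \ref{miy main thm} gives $\tr_A(\w_A) = \bigcap_{i\in I} J_{\mathbb{Z}}(x;\sigma_i)$, and a flatness argument (each $A/J_{\mathbb{Z}}(x;\sigma_i)$ and the intersection's quotient are $\mathbb{Z}$-free) identifies $\tr_A(\w_A) R$ with $\bigcap_{i\in I} J_B(x;\sigma_i)$. Each $J_B(x;\sigma_i)$ is radical since $R/J_B(x;\sigma_i) = G_B(X;\sigma_i)$ is an ASL over the reduced ring $B$, and $\tr_B(\w_B) R$ is radical since $B/\tr_B(\w_B)$ is reduced by the CTR hypothesis on $B$; hence $(\bigcap_{i\in I} J_B(x;\sigma_i)) \cap \tr_B(\w_B) R$ is radical. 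Since $\U = I$ when $\k - \k' \le 1$, Theorem \ref{gor locus of Sch. cycle} gives $V(\tr_R(\w_R)) = V\bigl((\bigcap_{i\in I} J_B(x;\sigma_i)) \cap \tr_B(\w_B) R\bigr)$, so taking radicals and combining with the lower-bound inclusion above forces $\tr_R(\w_R) = (\bigcap_{i\in I} J_B(x;\sigma_i)) \cap \tr_B(\w_B) R$, which is radical. This proves $R$ is CTR with the stated canonical trace.

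For the \emph{only if} direction, suppose $R$ is CTR. To extract $\k - \k' \le 1$, fix a minimal prime $\q$ of $B$; by reducedness $B_\q = \k(\q)$, so the $B$-localization $R \otimes_B B_\q$ equals $G_{\k(\q)}(X;\g)$. CTR is preserved by localization (trace ideals commute with localization and radicalness is local), so $G_{\k(\q)}(X;\g)$ is CTR and Corollary \ref{miy main thm} yields $\k - \k' \le 1$. To extract that $B$ is CTR, fix any prime $\q$ of $B$ and set $\p = \q R$; this is prime because $R/\q R = G_{B/\q}(X;\g)$ is a domain. The induced map $B_\q \to R_\p$ is flat local with fiber $\mathrm{Frac}(G_{\k(\q)}(X;\g))$, a field and hence Gorenstein; the standard fibre formula then gives $\w_{R_\p} = \w_{B_\q} \otimes_{B_\q} R_\p$ and thus $\tr_{R_\p}(\w_{R_\p}) = \tr_{B_\q}(\w_{B_\q}) R_\p$. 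Since $R_\p$ is CTR, this ideal is radical, and faithful flatness of $B_\q \to R_\p$ descends radicalness to $\tr_{B_\q}(\w_{B_\q})$, so every $B_\q$ is CTR.

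The main obstacle is the collection of compatibility checks surrounding base change: that $\w_A \otimes_{\mathbb{Z}} \w_B$ really is a canonical module of $R$ in the generality stated, and that intersections of the ideals $J_{\mathbb{Z}}(x;\sigma_i)$ commute with $(-)\otimes_{\mathbb{Z}} B$ to identify $\tr_A(\w_A) R$ with $\bigcap_{i\in I} J_B(x;\sigma_i)$. Both reduce to $\mathbb{Z}$-freeness of appropriate ASL quotients, but each requires careful justification before Lemma \ref{HHS not field} and Theorem \ref{gor locus of Sch. cycle} can be combined to conclude.
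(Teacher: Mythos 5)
Your proposal is correct. The overall skeleton — base change from $A=G_{\Z}(X;\g)$, canonical module $\w_R=\w_A\otimes_{\Z}\w_B$, Lemma~\ref{HHS not field} applied with $C=\Z$ — is the same as the paper's, but the execution of both implications differs enough to be worth noting.

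For the ``if'' direction, the paper does not invoke Theorem~\ref{gor locus of Sch. cycle}. Instead it proves the missing inclusion $\tr_R(\w_R)\subseteq\tr_A(\w_A)R$ from scratch by taking a prime $\p\supseteq J_B(x;\sigma_i)$ ($i\in I$), passing to the fiber $G_{\k(\q)}(X;\g)_P$, and reading off non-Gorensteinness there via Corollary~\ref{miy main thm} before climbing back up along the flat local map $B_\q\to R_\p$. You instead feed Theorem~\ref{gor locus of Sch. cycle} directly into Remark~\ref{cano trace no remark}(2) to get equality of vanishing loci, then use the lower bound from Lemma~\ref{HHS not field} plus radicality of the intersection to force equality of ideals. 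This is a genuine shortcut: the content is the same (Theorem~\ref{gor locus of Sch. cycle} was itself proved by the fiber argument), but packaging it through the already-established theorem is cleaner. One small point: your claim that $A/\tr_A(\w_A)$ is $\Z$-free ``because the standard-monomial basis descends'' works because under $\k-\k'\le1$ the product collapses to a single intersection, which is a poset ideal; in general a product of poset ideals need not be spanned by standard monomials, so the flatness argument is tied to the hypothesis exactly as needed (the paper instead embeds $A/\tr_A(\w_A)\hookrightarrow\bigoplus_i A/J_\Z(x;\sigma_i)$ to see torsion-freeness).

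For the ``only if'' direction, the approaches diverge more substantially. The paper localizes at the single element $\g$ and exploits the structural fact (\cite[Lemma 6.1]{BV}) that $A_\g$ is a localization of a polynomial ring over $\Z$ by a prime element, so $(\w_A)_\g\cong A_\g$ and $(\tr_R\w_R)R_\g=(\tr_B\w_B)R_\g$; radicality then forces $B/\tr_B(\w_B)$ (a subring of the reduced $R_\g/(\tr_B\w_B)R_\g$) to be reduced. You instead fix an arbitrary prime $\q$ of $B$, take the extension $\p=\q R$, and use the flat local map $B_\q\to R_\p$ with field closed fiber to transfer the canonical trace, then descend radicality by the standard $\mathfrak{a}R_\p\cap B_\q=\mathfrak{a}$ identity for faithfully flat maps. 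This avoids \cite[Lemma 6.1]{BV} altogether and is arguably more elementary; the price is that you need to verify the closed fiber of $B_\q\to R_\p$ is a field (which you do) and that $\q R$ is prime (requiring $G_{B/\q}(X;\g)$ to be a domain over the domain $B/\q$, which is standard ASL theory). The extraction of $\k-\k'\le1$ by localizing at a minimal prime of $B$ is the same as the paper's.

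Both routes are sound; yours is shorter in the ``if'' part and more hands-on in the ``only if'' part.
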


\begin{proof}
Let $A=G_{\Z}(X; \g)$ and $R=A\otimes_{\Z} B=G_B(X; \g)$.
It follows from \cite[Corollary 5.17]{BV} and Corollary \ref{miy main thm} that $A$ is a Cohen--Macaulay ring with canonical module $\w_A$, and it is CTR if and only if $\k-\k'\le 1$.
Also, $\w_R:=\w_A\otimes_{\Z} \w_B$ is a canonical module of $R$; see \cite[Theorem 3.6]{BV}.

We first prove the ``if'' part.
Suppose $B$ is CTR and $\k-\k'\le 1$.
Then by Corollary \ref{miy main thm}, we obtain $\tr_A(\w_A)=\bigcap_{i\in I} J_\Z(x; \sigma_i)$, where $I=\{i\mid 1\le i\le t, \k_i\ne \k_{i-1}\}$.
Since $A/J_\Z(x; \sigma_i)\cong G_{\Z}(X; \sigma_i)$ is a free $\Z$-module, it is $\Z$-torsionfree.
Considering the natural embedding $A/\tr_A(\w_A) \hookrightarrow \bigoplus_{i\in I} A/J_\Z(x; \sigma_i)$, we see that $A/\tr_A(\w_A)$ is $\Z$-torsionfree, which means that it is flat over $\Z$.
Lemma \ref{HHS not field} yields 
\begin{equation}\label{inc of can trace}
\tr_A(\w_A) R\cap \tr_B(\w_B) R\subseteq \tr_R(\w_R) \subseteq \tr_B(\w_B) R.
\end{equation}
Since $\{\xi \in \G(X; \d) \mid \xi \ngeq \sigma_i$ for all $i\in I\}$ generates $\tr_A(\w_A)=\bigcap_{i\in I} J_\Z(x; \sigma_i)$ as an ideal in $A$ and also generates $\bigcap_{i\in I} J_B(x; \sigma_i)$ as an ideal in $R$, it follows that $\tr_A(\w_A) R$ coincides with $\bigcap_{i\in I} J_B(x; \sigma_i)$.
Note that $R/J_B(x; \sigma_i)\cong G_B(X; \sigma_i)$ is reduced for every $i\in I$.
By this, $\tr_A(\w_A) R$ is a radical ideal of $R$.
On the other hand, $\tr_B(\w_B) R$ is also a radical ideal of $R$ since $R/\tr_B(\w_B) R\cong A\otimes_\Z (B/\tr_B(\w_B))\cong G_{B/\tr_B(\w_B)}(X; \g)$ is reduced.
From the above, once we show $\tr_R(\w_R) \subseteq \tr_A(\w_A) R$, it follows, in combination with (\ref{inc of can trace}), that $\tr_R(\w_R)=\tr_A(\w_A) R\cap \tr_B(\w_B) R$ and thus $R$ is CTR.
The latter assertion of the theorem also follows from the above argument.

Let $\p$ be a prime ideal of $R$ containing $\tr_A(\w_A) R=\bigcap_{i\in I} J_B(x; \sigma_i)$.
There is $i\in I$ such that $J_B(x; \sigma_i)\subseteq \p$.
Put $\q=\p\cap B$ and $\k(\q)=B_\q/\q B_\q$.
The prime ideal $P$ of $G_{\k(\q)}(X; \g)$, which appears in the natural isomorphism $R_\p/\q R_\p\cong (\k(\q)\otimes_B R)_P=G_{\k(\q)}(X; \g)_P$, contains $J_{\k(\q)}(x; \sigma_i)$.
As $\k(\q)$ is a field, Corollary \ref{miy main thm} asserts that $P$ contains the canonical trace of $G_{\k(\q)}(X; \g)$, which implies that $R_\p/\q R_\p\cong G_{\k(\q)}(X; \g)_P$ is not Gorenstein; see Remark \ref{cano trace no remark}(2).
Since $\Z\to A$ is flat, $B\to A\otimes_\Z B=R$ is also flat, and thus so is $B_\q\to R_\p$.
As $R_\p/\q R_\p$ is not Gorenstein, $R_\p$ is not Gorenstein.
By Remark \ref{cano trace no remark}(2) again, $\p$ contains $\tr_R(\w_R)$.
From the above argument, we conclude that $\tr_R(\w_R) \subseteq \sqrt{\smash[b]{\tr_A(\w_A) R}}=\tr_A(\w_A) R$.

Next, we prove the ``only if'' part.
Suppose $R$ is CTR.
It follows from \cite[Lemma 6.1]{BV} that $A_\g$ is isomorphic to a localization of a polynomial ring over $\Z$ by a prime element.
In particular, $A_\g$ is a unique factorization domain, and thus $(\w_A)_\g\cong A_\g$.
(This can be seen, for example, by considering the class group.)
Since $(\w_R)_\g \cong (\w_A)_\g \otimes_\Z \w_B\cong A_\g\otimes_\Z \w_B\cong R_\g\otimes_B \w_B$ and $R_\g$ is flat over $B$, we have 
$$
\Hom_{R_\g}((\w_R)_\g, R_\g)\cong \Hom_{R_\g}(R_\g\otimes_B \w_B, R_\g\otimes_B B)\cong R_\g\otimes_B \Hom_B(\w_B, B).
$$
We obtain $(\tr_R (\w_R))R_\g=(\tr_B (\w_B))R_\g$ and it is a radical ideal by assumption. 
Again by \cite[Lemma 6.1]{BV}, $R_\g/ (\tr_B (\w_B))R_\g\cong R_\g\otimes_B (B/\tr_B (\w_B))\cong A_\g\otimes_\Z (B/\tr_B (\w_B))\cong G_{B/\tr_B (\w_B)}(X; \g)_\g$ is isomorphic to a localization of a polynomial ring over $B/\tr_B (\w_B)$ by a prime element.
Since $R_\g/ (\tr_B (\w_B))R_\g$ is reduced, we conclude that its subring $B/\tr_B (\w_B)$ is also reduced, which means that $B$ is CTR.
Let $\q$ be a minimal prime ideal of $B$.
Then $B_\q$ is a field as $B$ is reduced.
Since $R_\q\cong A\otimes_\Z B_\q\cong G_{B_\q}(X; \g)$ is CTR, we get $\k-\k'\le 1$ by Corollary \ref{miy main thm}.
\end{proof}

\begin{rem}\label{reduced no hitsuyousei}
The assumption that $B$ is reduced in Theorem \ref{main base change} is essential in the following sense.
Suppose that $B$ is a Gorenstein ring that is not reduced, $\k-\k'=1$, and $I=\{i\}$ is a singleton, that is, the canonical trace of $G_{\Z}(X; \g)$ is $J_{\Z}(x; \sigma_i)$, which is a prime ideal of $G_{\Z}(X; \g)$.
Then $J_B(x; \sigma_i)$ is not a radical ideal of $G_{B}(X; \g)$ because $G_{B}(X; \g)/J_B(x; \sigma_i)\cong G_{B}(X; \sigma_i)$ is not reduced.
This means that $G_{B}(X; \g)$ being CTR and $J_B(x; \sigma_i)$ being the canonical trace are not compatible, so the conclusion of Theorem \ref{main base change} does not hold.
We also note that the CTR property of $G_{B}(X; \g)$ does not imply that $B$ is reduced; see \cite[Corollary 8.13]{BV}.
\end{rem}

As a remark, when $m=n$, $\G(X)$ is a singleton, and nothing nontrivial occurs.
The case of interest is $m<n$, and, as we will see in the next section, in this situation $G(X; \g)$ reduces to a certain ring $R(Y; \d)$ over the indeterminates $Y$ forming a $m \times (n-m)$-matrix.
Therefore, we leave the presentation of concrete examples to the next section.

\section{Determinantal rings}

In the previous section, we studied the CTR property and the non-Gorenstein locus of the ASL $G(X; \g)$.
In this section, we consider another important example of an ASL $R(X; \d)$.
For each $R(X; \d)$, there exists a corresponding $G(\widetilde{X}; \widetilde{\d})$ that preserves the poset structure and some properties of rings are inherited as well.
We again record here the standard notation used in this paper.

\begin{conv}\label{conv of Determinantal rings}
Let $B$ a ring, and $X=(X_{i j})$ an $m\times n$-matrix of indeterminates.

(1) For $r\le {\rm mim}\{m,n\}$ and integers $1\le a_1< \cdots <a_r\le m$, $1\le b_1< \cdots <b_r\le n$, we denote by $[a_1, \ldots, a_r | b_1, \ldots, b_r]$ the $r$-minor of $X$ corresponding to the rows $a_1, \ldots, a_r$ and the columns $b_1, \ldots, b_r$.
Let $\D(X)$ denote the set of all minors of $X$; that is,
$$
\D(X):=\{[a_1, \ldots, a_r | b_1, \ldots, b_r] \mid  r\le {\rm mim}\{m,n\}, 1\le a_1< \cdots <a_r\le m, 1\le b_1< \cdots <b_r\le n \}.
$$
Then $\D(X)$ is ordered partially in the following way:
$$
[a_1, \ldots, a_r | b_1, \ldots, b_r]\le [c_1, \ldots, c_s | d_1, \ldots, d_s] \iff r\ge s\ {\rm and} \ a_i\le c_i, b_i\le d_i \ {\rm for\ all} \ 1\le i\le s.
$$
For each $\d\in \D(X)$, we adopt the notation below:
\begin{itemize}
\item $I(X; \d)$ is the ideal of $B[X]$ generated by $\{\g \in \D(X) \mid \g \ngeq \d \}$;
\item $R(X; \d)$ is the quotient of $B[X]$ by the ideal $I(X; \d)$;
\item $\D(X; \d):=\{\g \in \D(X) \mid \g \ge \d \}$ is the subset of $\D(X)$.
\item For $\g \in \D(X; \d)$, $I(x; \g):= J(X; \g)/J(X; \d)$ is the ideal of $R(X; \d)$.
\end{itemize}
We wish to make the base ring explicit, we may write $I_B(X; \d), R_B(X; \d)$, and so on.
If all indeterminates are assigned degree 1, $R_B(X; \d)$ is a graded ASL over $B$.
In particular, if the poset $\D(X; \g)$ is totally ordered, then $R(X; \d)$ is a polynomial ring over $B$; this situation is sometimes excluded as a trivial case.
In this paper, following \cite{BV}, we call these rings $R(X; \d)$ \textit{determinantal rings}; see \cite[Section 1.C. and Theorem 5.3]{BV} for instance. (This is a special case of what is known as \textit{a ladder determinantal ring}.)

(2) We build the matrix $\~{X}$ from $X$ and $m$ new columns attached to $X$:
\begin{align*}
\~{X}=
\begin{pmatrix}
   X_{1\, 1} & \cdots &  X_{1\, n} & X_{1\, n+1} & \cdots &  X_{1\, n+m}\\
   \vdots & \ddots & \vdots & \vdots & \ddots & \vdots \\
   X_{m\, 1} & \cdots &  X_{m\, n} & X_{m\, n+1} & \cdots &  X_{m\, n+m}\\
\end{pmatrix}.
\end{align*}
The following substitution of matrix entries gives a surjective homomorphism from $B[\~{X}]$ to $B[X]$:
\begin{align*}
\begin{pmatrix}
   X_{1\, 1} & \cdots &  X_{1\, n} & 0 & \cdots & 0 & 0 & 1\\
   \vdots &  & \vdots &  0 & \cdots & 0 & 1 & 0 \\
   \vdots & \ddots & \vdots & \vdots & \iddots & \iddots & \iddots & \vdots \\
   \vdots &  & \vdots & 0 & 1 & 0 & \cdots & 0 \\
   X_{m\, 1} & \cdots &  X_{m\, n} & 1 & 0 & 0 & \cdots &  0\\
\end{pmatrix}.
\end{align*}
This homomorphism induces a surjection $\varphi: G(\~{X})\to B[X]$, which satisfies the following; see \cite{BV}.
In what follows, we adopt the notation for Schubert cycles given in Convention \ref{conv of Schubert cycle}.
\begin{itemize}
\item We have $\varphi([n+1, \ldots, n+m])=(-1)^{m(m-1)/2}$.
For $[n+1, \ldots, n+m]\ne [b_1, \ldots, b_m] \in \G(\~{X})$, let $r={\rm max}\{j\mid b_j\le n\}$ and $1\le a_1< \cdots <a_r\le m$ such that 
$$\{a_1, \ldots, a_r, (m+n+1)-b_m, \ldots, (m+n+1)-b_{r+1}\}=\{1,2,\ldots, m\}.$$
Under the map $\varphi$, $[b_1, \ldots, b_m]$ is sent to $[a_1, \ldots, a_r | b_1, \ldots, b_r]$, up to sign.
\item The map $\varphi$ induces an isomorphism between the posets $\G(\~{X})\setminus\{[n+1, \ldots, n+m]\}$ and $\D(X)$.
Furthermore, for $\d\in \D(X)$ and $\~{\d}=\pm\varphi^{-1}(\d)\in \G(\~{X})\setminus\{[n+1, \ldots, n+m]\}$, $\varphi$ induces an isomorphism between the posets $\G(\~{X}; \~{\d})\setminus\{[n+1, \ldots, n+m]\}$ and $\D(X; \d)$.
\item Put $\e=(-1)^{m(m-1)/2}[n+1, \ldots, n+m]$.
Then $\varphi$ induces a dehomogenization $G(\~{X})/(1-\e) \cong B[X]$.
Furthermore, for $\d\in \D(X)$ and $\~{\d}=\pm\varphi^{-1}(\d)\in \G(\~{X})\setminus\{[n+1, \ldots, n+m]\}$, $\varphi$ induces a dehomogenization $G(\~{X}; \~{\d})/(1-\e) \cong R(X; \d)$.
Note that for any $\g\in \D(X; \d)$ and $\~{\g}=\pm\varphi^{-1}(g)\in \G(\~{X}; \~{\d})\setminus\{[n+1, \ldots, n+m]\}$, $\e$ is a non-zerodivisor on $G(\~{X}; \~{\d})/J(x; \~{\g})=G(\~{X}; \~{\g})$.
Therefore the ideals $J(x; \~{\g})$ and $I(x; \g)$ are in one-to-one correspondence via dehomogenization.
\end{itemize}

(3) Let $\d\in \D(X)$, and $\~{\d}=\pm\varphi^{-1}(\d)\in \G(\~{X})\setminus\{[n+1, \ldots, n+m]\}$.
Here, under Convention \ref{conv of Schubert cycle}, we denote by $\~{\eta}_0, \ldots, \~{\eta}_t\in \G(\~{X}; \~{\d})$ the elements corresponding to ``$\zeta_0, \ldots, \zeta_t$'' 
and by $\l_0, \ldots, \l_t$ the integers corresponding to ``$\k_0, \ldots, \k_t$'' when we set ``$\g$''$=\~{\d}$.
Set $\l:=\max\{ \l_i \mid 0\le i\le t\}$, $\l':=\min\{ \l_i \mid 0\le i\le t\}$ and $\eta_i=\pm \varphi(\~{\eta}_i) \in \D(X; \d)$ for any $0\le i\le t$.
Similarly, under Convention \ref{conv of Schubert cycle}, we denote by $\~{\tau}_1, \ldots, \~{\tau}_t\in \G(\~{X}; \~{\d})$ the elements corresponding to ``$\sigma_1, \ldots, \sigma_t$'' when we set ``$\g$''$=\~{\d}$.
Then we put $\tau_i=\pm \varphi(\~{\tau}_i) \in \D(X; \d)\cup \{1\}$ for any $1\le i\le t$; regarding $\{1\}$ here, see Remark \ref{very very remark}.
If $B$ is a Gorenstein normal domain, then via dehomogenization the canonical class admits a representation analogous to that in \cite[Fact 4.2]{Miy}, namely, 
$$
\sum_{i=0}^t \l_i \cl (I(x; \eta_i))
$$
Of course, one could use the components of $\d$ to explicitly denote $\eta_0,\ldots, \eta_t$, $\tau_1,\ldots, \tau_t$, and $\l_0,\ldots, \l_t$, as in \cite[Theorem 8.14]{BV}. However, this would only complicate matters without providing any particular benefit, so we do not do so in this paper.
\end{conv}

\begin{rem}\label{very very remark}
Under Convention \ref{conv of Determinantal rings}(3), when we exclude the totally ordered case as trivial, we never have $\~{\eta}_i=[n+1, \ldots, n+m]$ for $0\le i\le t$, but it may happen that $\~{\tau}_i=[n+1, \ldots, n+m]$ (for example, when $m,n\ge 2$ and $\d=[1 | 1]$).
In this case, $\tau_i$ is not an element of the poset; instead, we define $\tau_i=1$, $\D(X; \tau_i)=\emptyset$, and let $I_B(x;\tau_i)$ denote the homogeneous ideal generated by all elements of $\D(X; \d)$.
All subsequent results may be read with this reinterpretation, and no problems will arise.
\end{rem}

First, we determine the canonical trace and the non-Gorenstein locus of $R(X; \d)$.
This characterization of the canonical trace allows us to discuss the CTR property of $R(X; \d)$.
Similarly, we set up the minimal necessary notation.

\begin{conv}\label{conv of Determinantal rings 2}
Here, the notation follows Convention \ref{conv of Determinantal rings}(3).
For each integer $1\le h\le \l-\l'$, we set 
\begin{itemize}
\item $\mathfrak{S}_h=\{i \mid \l-\l_i\ge h\}$ and $\mathfrak{T}_h=\{i \mid \l-\l_i< h\}$;
\item $\mathfrak{U}_h^{+}=\{i \mid i \in \mathfrak{S}_h, i-1\in \mathfrak{T}_h\},\ \mathfrak{U}_h^{-}=\{i \mid i \in \mathfrak{T}_h, i-1\in \mathfrak{S}_h\}$, and $\mathfrak{U}_h=\mathfrak{U}_h^{+}\cup \mathfrak{U}_h^{-}$;
\item $\mathfrak{U}=\bigcup_{h=1}^{\l-\l'} \mathfrak{U}_h$.
\end{itemize}
\end{conv}

\begin{prop}\label{general case of cano trace det ring}
Under Conventions \ref{conv of Determinantal rings} and \ref{conv of Determinantal rings 2}, we assume that $B$ is a Gorenstein normal domain.
Then the canonical trace of $R_B(X; \d)$ is 
$$\prod_{h=1}^{\l-\l'}\Bigl( \bigcap_{i\in\mathfrak{U}_h} I_B(x;\tau_i) \Bigr).$$
In particular, the minimal elements of the Gorenstein locus $R_B(X; \d)$ are exactly $I_B(x;\tau_i)$ with $i\in\mathfrak{U}$.
\end{prop}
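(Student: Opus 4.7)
The plan is to reduce the statement to the Schubert cycle version established in Proposition \ref{general case of cano trace} via the dehomogenization isomorphism $\varphi : G_B(\~{X}; \~{\d})/(1-\e) \xrightarrow{\sim} R_B(X; \d)$ of Convention \ref{conv of Determinantal rings}(2). By construction in Convention \ref{conv of Determinantal rings}(3), the integers $\l_i, \l, \l'$ and the poset elements $\~{\tau}_i$ attached to $R_B(X; \d)$ coincide with the data $\k_i, \k, \k'$ and $\sigma_i$ of the auxiliary Schubert cycle $G_B(\~{X}; \~{\d})$. Hence the sets $\mathfrak{S}_h, \mathfrak{T}_h, \mathfrak{U}_h$ of Convention \ref{conv of Determinantal rings 2} are literally the sets $\S_h, \T_h, \U_h$ of Convention \ref{conv of Schubert cycle2} for $G_B(\~{X}; \~{\d})$, and Proposition \ref{general case of cano trace} applied to $G_B(\~{X}; \~{\d})$ supplies the identity
$$
\tr_{G_B(\~{X}; \~{\d})}(\w) = \prod_{h=1}^{\l-\l'}\Bigl( \bigcap_{i\in\mathfrak{U}_h} J_B(x;\~{\tau}_i) \Bigr).
$$

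The next step is to transport this identity through $\varphi$. By Corollary \ref{dehom cor}, the image in $R_B(X; \d)$ of the canonical trace on the left-hand side is the canonical trace of $R_B(X; \d)$. Under $\varphi$, each ideal $J_B(x; \~{\tau}_i)$ maps to $I_B(x; \tau_i)$, with the exceptional case $\~{\tau}_i = [n+1, \ldots, n+m]$ absorbed by the convention in Remark \ref{very very remark}. Because $1-\e$ is a non-zerodivisor on every finitely generated graded $G_B(\~{X}; \~{\d})$-module (by \cite[Lemma 1.5.6(b)(ii)]{BH}), a standard Tor argument applied to the short exact sequence $0\to R/(I\cap J)\to R/I\oplus R/J\to R/(I+J)\to 0$ shows that dehomogenization commutes with binary, hence finite, intersections of finitely generated graded ideals; compatibility with products is automatic for quotients. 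Putting these together yields the asserted formula for $\tr_{R_B(X; \d)}(\w)$.

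For the last assertion, each $I_B(x; \tau_i)$ is prime, being the dehomogenization of the prime ideal $J_B(x; \~{\tau}_i)$ of the Schubert cycle and primality being preserved by Proposition \ref{dehom ippan}. The radical of $\prod_{h=1}^{\l-\l'}( \bigcap_{i\in\mathfrak{U}_h} I_B(x;\tau_i))$ coincides with $\bigcap_{i \in \mathfrak{U}} I_B(x; \tau_i)$: the product is contained in this intersection, which is radical as an intersection of primes, and conversely any $f$ in the intersection satisfies $f^{\l-\l'}\in \prod_h(\bigcap_{i\in\mathfrak{U}_h} I_B(x;\tau_i))$. Combining this with Remark \ref{cano trace no remark}(2) identifies $\{I_B(x; \tau_i) : i \in \mathfrak{U}\}$ as the set of minimal primes of the non-Gorenstein locus of $R_B(X; \d)$.

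The main obstacle is really just careful bookkeeping: confirming that the combinatorial data $\mathfrak{U}_h$ is transferred unchanged from the Schubert side under $\varphi$, and that the edge case where $\~{\tau}_i$ equals the distinguished minor $[n+1, \ldots, n+m]$ is handled correctly by the convention of Remark \ref{very very remark}. Both points are set up by Section 2 together with Convention \ref{conv of Determinantal rings}, so no new ideas beyond those already developed for Schubert cycles are needed.
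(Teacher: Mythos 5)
Your proposal follows the same top-level route as the paper: apply Proposition \ref{general case of cano trace} to the auxiliary Schubert cycle $G_B(\~{X};\~{\d})$, note that the combinatorial data $\l_i,\mathfrak{U}_h$ are by construction the data $\k_i,\U_h$ of that Schubert cycle, and transport the resulting formula through dehomogenization using Corollary \ref{dehom cor}. The one place the two arguments diverge is the verification that $\bigcap_{i\in\mathfrak{U}_h} J_B(x;\~{\tau}_i)$ dehomogenizes to $\bigcap_{i\in\mathfrak{U}_h} I_B(x;\tau_i)$. The paper observes that both sides are generated, as ASL ideals by \cite[Proposition 5.2]{BV}, by the poset elements $\{\xi \mid \xi \ngeq \tau_i \text{ for all } i\}$, which correspond bijectively under $\varphi$. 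You instead give a homological argument: $1-\e$ is a non-zerodivisor on the finitely generated graded module $R/(I+J)$, so $\Tor_1^R(R/(I+J),A)=0$, and tensoring the Mayer--Vietoris sequence with $A$ stays exact, giving $(I\cap J)A = IA\cap JA$. Both are correct; the paper's is native to the ASL machinery it has already set up, while yours is a general-purpose fact that avoids an additional citation to \cite{BV}. One small imprecision: for the primality of $I_B(x;\tau_i)$ you cite Proposition \ref{dehom ippan}, but the ``in particular'' clause there is stated only for trace ideals; what you actually want is the general dehomogenization result \cite[Proposition 16.26(c)]{BV} that it references (or, more directly, that $R_B(X;\tau_i)$ is a domain when $B$ is). This is cosmetic and does not affect the argument.
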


\begin{proof}
Proposition \ref{general case of cano trace} says that the canonical trace of $G_B(\~{X}; \~{\d})$ is 
$$\mathfrak{a}:=\prod_{h=1}^{\l-\l'}\Bigl( \bigcap_{i\in\mathfrak{U}_h} J_B(x;\~{\tau}_i) \Bigr).$$
For each $1\le h\le \l-\l'$, $\{\~{\xi} \in \G(\~{X}; \~{\d}) \mid \~{\xi} \ngeq \~{\tau}_i$ for all $i\in\mathfrak{U}_h\}$ is a generator of $\bigcap_{i\in\mathfrak{U}_h} J_B(x;\~{\tau}_i)$ as an ideal of $G_B(\~{X}; \~{\d})$, and 
$\{\xi \in \D(X; \d) \mid \xi \ngeq \tau_i$ for all $i\in\mathfrak{U}_h\}$ is a generator of $\bigcap_{i\in\mathfrak{U}_h} I_B(x;\tau_i)$ as an ideal of $R_B(X; \d)$.
Since these generators corresponds under dehomogenization, we obtain $\mathfrak{a}R_B(X; \d)=\prod_{h=1}^{\l-\l'}( \bigcap_{i\in\mathfrak{U}_h} I_B(x;\tau_i))$, which is the canonical trace of $R_B(X; \d)$ by Corollary \ref{dehom cor}.
\end{proof}

\begin{ex}\label{kireina det ring ex}
Proposition \ref{general case of cano trace det ring} generalizes \cite[Theorem 1.1]{FHST}.
Indeed, we consider the situation where the base ring $B$ is a field (or, if we fully exploit Proposition \ref{general case of cano trace det ring}, it suffices to assume that $B$ is a Gorenstein normal domain) and set $\d=[1,\ldots, r | 1,\ldots, r]$ with $1\le r<{\rm min}\{m,n\}$.
Then we have $\~{\d}=[1,\ldots, r, n+1, \ldots, n+m-r]$, $t=1$, $\l_0=n+r$, $\l_1=m+r$, $\~{\tau}_1=[1,\ldots, r-1, n+1, \ldots, n+m-r+1]$, and if $r>1$, $\tau_1=[1,\ldots, r-1 | 1,\ldots, r-1]$; otherwise, $\tau_1=1$ in the sense of Convention \ref{conv of Determinantal rings}.
Proposition \ref{general case of cano trace det ring} asserts that the canonical trace of $R(X; \d)=B[X]/I_{r+1}(X)$ is 
$$
I(x;\tau_i)^{|n-m|}=I_r(X)^{|n-m|} R(X; \d).
$$
When $|n-m|=1$, $G(\~{X}; \~{\d})$ is a typical example of the situation considered in Remark \ref{reduced no hitsuyousei} where $I$ is a singleton, that is, the canonical trace is prime.
\end{ex}

Theorem \ref{gor locus of det ring} can be established in the same manner as Theorem \ref{gor locus of Sch. cycle}, by using Proposition \ref{general case of cano trace det ring} in place of Proposition \ref{general case of cano trace}.
This is the Gorenstein version of \cite[Theorem 6.10]{BV}.

\begin{thm}\label{gor locus of det ring}
Under Conventions \ref{conv of Determinantal rings} and \ref{conv of Determinantal rings 2}, let $\p$ be a prime ideal of $R_B(X; \d)$ and $\q=\p\cap B$.
Then $R_B(X; \d)_\p$ is Gorenstein if and only if $B_\q$ is Gorenstein and $\p$ does not contain $I_B(x;\tau_i)$ for any $i\in \mathfrak{U}$. 
In particular, if $\mathfrak{a}$ is a defining ideal of the non-Gorenstein locus of $B$, then 
$$\Bigl( \bigcap_{i\in\mathfrak{U}} I_B(x;\tau_i) \Bigr) \cap \mathfrak{a}R_B(X; \d)$$
is a defining ideal of the non-Gorenstein locus of $R_B(X; \d)$.
\end{thm}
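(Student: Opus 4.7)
The plan is to mirror the proof of Theorem \ref{gor locus of Sch. cycle} almost verbatim, with Proposition \ref{general case of cano trace det ring} substituted for Proposition \ref{general case of cano trace}. Set $R := R_B(X;\d)$ and, given a prime $\p$ of $R$, put $\q = \p \cap B$ and $\k(\q) := B_\q/\q B_\q$. The homomorphism $B_\q \to R_\p$ is flat and local --- flatness coming from the $B$-freeness of $R_B(X;\d)$ inherited from its ASL presentation --- so $R_\p$ is Gorenstein if and only if $B_\q$ is Gorenstein and the fiber $R_\p/\q R_\p$ is Gorenstein.

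Next I would identify the fiber with a localization of $R_{\k(\q)}(X;\d)$ via the canonical isomorphism $R_\p/\q R_\p \cong (\k(\q) \otimes_B R)_P = R_{\k(\q)}(X;\d)_P$, where $P$ is the prime of $R_{\k(\q)}(X;\d)$ induced by $\p$. Under this identification the generators of $I_B(x;\tau_i)$ correspond to the generators of $I_{\k(\q)}(x;\tau_i)$, so the condition that $\p$ does not contain $I_B(x;\tau_i)$ is equivalent to the condition that $P$ does not contain $I_{\k(\q)}(x;\tau_i)$. Proposition \ref{general case of cano trace det ring}, applied over the Gorenstein normal domain $\k(\q)$, identifies the minimal primes of the canonical trace of $R_{\k(\q)}(X;\d)$ as exactly the $I_{\k(\q)}(x;\tau_i)$ with $i \in \mathfrak{U}$; hence by Remark \ref{cano trace no remark}(2) the fiber ring $R_{\k(\q)}(X;\d)_P$ is Gorenstein precisely when $P$ contains none of these ideals. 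Chaining the equivalences yields the first assertion.

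For the ``in particular'' clause, the first assertion implies that the non-Gorenstein locus of $R$ equals the union $V(\mathfrak{a}R) \cup V\bigl(\bigcap_{i \in \mathfrak{U}} I_B(x;\tau_i)\bigr)$, which coincides with $V\bigl((\bigcap_{i \in \mathfrak{U}} I_B(x;\tau_i)) \cap \mathfrak{a}R\bigr)$, exhibiting the displayed ideal as a defining ideal of the non-Gorenstein locus. I do not anticipate any real obstacle, since all combinatorial content has already been packaged into Proposition \ref{general case of cano trace det ring}; the only point requiring a bit of care is the compatibility of the ideals $I_B(x;\tau_i)$ under the base change $B \to \k(\q)$, which follows because the ASL presentation of a determinantal ring is preserved under arbitrary change of base ring.
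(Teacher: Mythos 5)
Your proposal is correct and matches the paper's approach exactly: the paper literally states that Theorem \ref{gor locus of det ring} is proved ``in the same manner as Theorem \ref{gor locus of Sch. cycle}, by using Proposition \ref{general case of cano trace det ring} in place of Proposition \ref{general case of cano trace},'' and your proof reproduces that argument (flat local base change $B_\q \to R_\p$, identification of the fiber with a localization of $R_{\k(\q)}(X;\d)$, and application of Proposition \ref{general case of cano trace det ring} over the residue field). The only minor point worth keeping in mind --- which you implicitly handle --- is the degenerate case $\tau_i = 1$ from Remark \ref{very very remark}, where $I_B(x;\tau_i)$ is the ideal generated by all of $\D(X;\d)$; over the field $\k(\q)$ this is still prime, so the argument goes through unchanged.
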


We want to provide a necessary and sufficient condition for the CTR property for $R(X; \d)$, analogous to Corollary \ref{miy main thm}. 
In the proof of the ``only if'' part of Corollary \ref{miy main thm}, it was crucial that all elements of $\G(X;\g)$ are homogeneous of degree one.
On the other hand, the elements of $\D(X;\d)$ have degrees that vary from element to element, so the original proof cannot be applied directly. 
To resolve this problem, we prepare the lemma below.

\begin{lem}\label{degree no kensa}
Let $B$ and $X$ be as in Convention \ref{conv of Determinantal rings}, and write $\d=[c_1, \ldots, c_r | d_1, \ldots, d_r] \in\D(X)$.
For any $1\le i\le t$, there exists $1\le N_i\le r$ such that for $\tau_i$ in Convention \ref{conv of Determinantal rings}(3), the following holds:
\begin{enumerate}[\rm(1)]
\item If $[e_1, \ldots, e_s | f_1, \ldots, f_s] \in \D(X; \d)\setminus \D(X; \tau_i)$, then $s\ge N_i$.
\item For any $N_i\le s\le r$, $[c_1, \ldots, c_s | d_1, \ldots, d_s]\in \D(X; \d)\setminus \D(X; \tau_i)$.
\end{enumerate}
\end{lem}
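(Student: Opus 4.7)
The plan is to take $N_i$ to be the smallest integer $s\in\{1,\ldots,r\}$ for which the initial minor $\xi_s:=[c_1,\ldots,c_s\mid d_1,\ldots,d_s]$ lies in $\D(X;\d)\setminus \D(X;\tau_i)$, disposing of the degenerate case $\tau_i=1$ of Remark \ref{very very remark} separately by setting $N_i=1$ (where both (1) and (2) become trivial since $\D(X;\tau_i)=\emptyset$). The remaining verification is then an elementary comparison inside the poset $\D(X)$.

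First I would show that this $N_i$ is well-defined when $\tau_i\ne 1$. Since the $k(i+1)$-th coordinate of $\~{\tau}_i$ is $b_{k(i+1)}+1 > b_{k(i+1)}$, one has $\~{\tau}_i > \~{\d}$ strictly in $\G(\~{X})$; because $\varphi$ induces an isomorphism between the posets $\G(\~{X};\~{\d})\setminus\{[n+1,\ldots,n+m]\}$ and $\D(X;\d)$ (Convention \ref{conv of Determinantal rings}(2)), we get $\tau_i>\d$ strictly in $\D(X)$, and hence $\d \not\ge \tau_i$. So $\xi_r=\d$ lies in $\D(X;\d)\setminus\D(X;\tau_i)$, the set $\{s\in [1,r]:\xi_s\not\ge \tau_i\}$ contains $r$, and its minimum $N_i$ satisfies $1\le N_i\le r$.

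To prove (2), I would show that the set $\{s\in [1,r]:\xi_s \in \D(X;\d)\setminus\D(X;\tau_i)\}$ is upward closed in $[1,r]$. Writing $\tau_i=[c'_1,\ldots,c'_{r'}\mid d'_1,\ldots,d'_{r'}]$, the condition $\xi_{s_0}\not\ge \tau_i$ means either $s_0>r'$ (which persists for all $s\ge s_0$) or there is a witness $j_0\le \min\{s_0,r'\}$ with $c_{j_0}<c'_{j_0}$ or $d_{j_0}<d'_{j_0}$ (which remains a witness for every $s\ge s_0$). Either way $\xi_s\not\ge \tau_i$ for all $s\ge s_0$, which is precisely (2). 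Finally, for (1), suppose $\xi=[e_1,\ldots,e_s\mid f_1,\ldots,f_s] \in \D(X;\d)\setminus\D(X;\tau_i)$ satisfies $s<N_i$. By minimality of $N_i$ we have $\xi_s\in \D(X;\tau_i)$, so $s\le r'$ and $c_j\ge c'_j$, $d_j\ge d'_j$ for all $j\le s$. Combined with $\xi\ge\d$, which gives $e_j\ge c_j$ and $f_j\ge d_j$, we obtain $e_j\ge c'_j$ and $f_j\ge d'_j$ for $j\le s\le r'$, so $\xi\ge \tau_i$, contradicting the choice of $\xi$.

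The only substantive point in the plan is verifying that $\tau_i>\d$ strictly in $\D(X)$ whenever $\tau_i\ne 1$. This is immediate at the Schubert level from the explicit recipe defining $\~{\tau}_i$, and is transported to $\D(X)$ by the poset-preserving property of $\varphi$; once this is granted, the rest of the argument is a routine poset manipulation about initial submatrices, and no case analysis on the relative positions of $k(i),k(i+1)$ with respect to $r$ is required.
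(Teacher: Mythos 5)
Your proof is correct and takes a genuinely more elementary route than the paper's. The paper works at the Schubert-cycle level: it transports the condition $\g\notin\D(X;\tau_i)$ through the dehomogenization $\varphi$ to the single inequality $b_{k(i)}<a_{k(i)+1}$ in $\G(\~{X};\~{\d})$ (exploiting the specific shape of $\~{\tau}_i$), and then splits into the cases $a_{k(i)+1}\le n+1$ and $n+1<a_{k(i)+1}$ to exhibit $N_i$ explicitly via the combinatorics of $\varphi$. You instead stay entirely inside $\D(X)$: the truncations $\xi_s=[c_1,\ldots,c_s\mid d_1,\ldots,d_s]$ decrease in $s$, so the set of $s\in\{1,\ldots,r\}$ with $\xi_s\not\ge\tau_i$ is an up-set and, being nonempty since $\xi_r=\d\not\ge\tau_i$, has a minimum $N_i$; this gives (2), and for any $s$-minor $\xi\ge\d$ with $s<N_i$ the coordinatewise sandwich $\tau_i\le\xi_s\le\xi$ gives $\xi\ge\tau_i$, which is (1). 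Your argument uses nothing about $\tau_i$ beyond $\tau_i\in\D(X;\d)$ and $\tau_i\ne\d$, and it avoids both the case analysis and the explicit $\varphi$-bookkeeping; the trade-off is that it does not produce a closed formula for $N_i$, but since Lemma \ref{new main lem} needs only existence together with (1) and (2), nothing is lost.
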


\begin{proof}
Let $\~{\d}$ and $\~{\tau}_i$ be as in Convention \ref{conv of Determinantal rings}(3) with respect to $\d$.
When we write $\~{\d}=[a_1, \ldots, a_m]$, it is stipulated in \ref{conv of Determinantal rings}(3), following the notation of Convention \ref{conv of Schubert cycle}, that 
$$
\~{\tau}_i=[\b_0, \ldots, \b_{i-2}, a_{k(i-1)+1}, a_{k(i-1)+2}, \ldots, a_{k(i)-1}, a_{k(i)+1}, a_{k(i)+2}, \ldots, a_{k(i+1)},  a_{k(i+1)}+1, \b_{i+1}, \ldots, \b_{t+1}].
$$
Let $\g:=[e_1, \ldots, e_s | f_1, \ldots, f_s] \in \D(X; \d)$ and $\~{\g}:=\pm\varphi^{-1}(\g)=[b_1, \ldots, b_m]\in \G(\~{X}; \~{\d})$, where $\varphi$ is the map defined in Convention \ref{conv of Determinantal rings}(2).
It is easy to see that 
\begin{equation}\label{equiv in degree no kensa}
\g\notin \D(X; \tau_i) \iff \~{\g}\notin\G(\~{X}; \~{\tau_i}) \iff b_{k(i)} < a_{k(i)+1}.
\end{equation}
Note that for any $1\le s\le r$, $[c_1, \ldots, c_s | d_1, \ldots, d_s]$ belongs to $\D(X; \d)$.

First, we treat the case where $a_{k(i)+1}\le n+1$.
In this case, the $N_i$ appearing in the conclusion is $k(i)$.
If $b_{k(i)} < a_{k(i)+1}$, then $b_{k(i)}\le n$, and thus $f_j=b_j$ for any $1\le j\le k(i)$.
In particular, $s\ge k(i)$.
By virtue of (\ref{equiv in degree no kensa}), (1) is established.
On the other hand, since $a_{k(i)}<a_{k(i)+1}\le n+1$, we see that $a_j=d_j$ for any $1\le j\le k(i)$.
For $k(i)\le s\le r$, $[c_1, \ldots, c_s | d_1, \ldots, d_s]\notin \D(X; \tau_i)$ as $d_{k(i)}=a_{k(i)}<a_{k(i)+1}$, which implies (2); see (\ref{equiv in degree no kensa}).

Next, we deal with the case where $n+1<a_{k(i)+1}$.
This assumption, in particular, yields $i>0$, $n+1\le a_{k(i)+1}-1$ and $a_{k(i)}<a_{k(i)+1}-1<a_{k(i)+1}$.
Observing the poset correspondence given by $\varphi$, we find an integer $N_i$ with $c_{N_i+1}=(m+n+1)-(a_{k(i)+1}-1)\le m$.
Suppose $\g\notin \D(X; \tau_i)$.
Then $b_{k(i)} < a_{k(i)+1}\le b_{k(i)+1}$ by (\ref{equiv in degree no kensa}).
It is seen that
\begin{align*}
\{1, 2, \ldots, m+n+1-a_{k(i)+1}\}&=\{c_1,\ldots, c_{N_i}, m+n+1-a_m, \ldots, m+n+1-a_{k(i)+1}\}  \\
& \supseteq \{m+n+1-b_m, \ldots, m+n+1-b_{k(i)+1}\};
\end{align*}
these are subsets of $\{1, 2, \ldots, m\}$.
We have $m+n+1-b_{k(i)}\notin\{1, 2, \ldots, m+n+1-a_{k(i)+1}\}$ since $b_{k(i)} < a_{k(i)+1}$.
The construction of $[e_1, \ldots, e_s | f_1, \ldots, f_s]$ from $[b_1, \ldots, b_m]$ shows that
$$
\{1, 2, \ldots, m+n+1-a_{k(i)+1}\}=\{e_1,\ldots, e_{N_i}, m+n+1-b_m, \ldots, m+n+1-b_{k(i)+1}\},
$$
which says $s\ge N_i$, and hence (1) holds.
Fix $N_i\le s\le r$.
We denote by $[d_1, \ldots, d_s, g_{s+1},\ldots, g_m]\in \G(\~{X}; \~{\d})$ the element corresponding to $[c_1, \ldots, c_s | d_1, \ldots, d_s]\in \D(X; \d)$.
Then the following equality
\begin{align*}
\{1, \ldots, m\}&=\{c_1,\ldots, c_s, m+n+1-g_m, \ldots, m+n+1-g_{s+1}\} \\
&=\{c_1,\ldots, c_s, c_{s+1}, \ldots, c_r, m+n+1-a_m, \ldots, m+n+1-a_{r+1}\}
\end{align*}
holds, and this deduces that $a_j=g_j$ for any integer $j$ with $m+n+1-a_j< c_{s+1}$.
Since 
$$
m+n+1-a_{k(i)+1}<(m+n+1)-(a_{k(i)+1}-1)=c_{N_i+1}\le c_{s+1},
$$
we have $g_{k(i)}<g_{k(i)+1}=a_{k(i)+1}$.
It follows from (\ref{equiv in degree no kensa}) that $[c_1, \ldots, c_s | d_1, \ldots, d_s] \notin \D(X; \tau_i)$.
\end{proof}

With this, we are ready to prove the $R(X; \d)$ version of Corollary \ref{miy main thm}.

\begin{lem}\label{new main lem}
Under Convention \ref{conv of Determinantal rings}, we assume that $B$ is a Gorenstein normal domain.
Then $R_B(X; \d)$ is CTR if and only if $\l-\l'\le 1$.
In these situations, the canonical trace of $R_B(X; \d)$ is $\bigcap_{i\in I} I(x; \tau_i)$, where $I=\{i\mid 1\le i\le t, \l_i\ne \l_{i-1}\}$.
\end{lem}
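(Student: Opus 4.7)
The plan is to parallel the proof of Corollary \ref{miy main thm}, using Proposition \ref{general case of cano trace det ring} in place of Proposition \ref{general case of cano trace} and invoking Lemma \ref{degree no kensa} to compensate for the fact that elements of $\D(X;\d)$ carry variable degrees as ring elements. Put $R=R_B(X;\d)$ and $\mathfrak{a}=\prod_{h=1}^{\l-\l'}\bigl(\bigcap_{i\in\mathfrak{U}_h} I_B(x;\tau_i)\bigr)$; this is the canonical trace by Proposition \ref{general case of cano trace det ring}. Each $I_B(x;\tau_i)$ is prime, since $R/I_B(x;\tau_i)\cong R_B(X;\tau_i)$ is a domain (reducing to $B$ in the degenerate case $\tau_i=1$ of Remark \ref{very very remark}). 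Consequently each $\bigcap_{i\in\mathfrak{U}_h} I_B(x;\tau_i)$, being a finite intersection of primes, is radical, and standard radical calculus gives $\sqrt{\mathfrak{a}}=\bigcap_{i\in\mathfrak{U}} I_B(x;\tau_i)$.

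This immediately settles the ``if'' direction together with the stated formula for the canonical trace. When $\l=\l'$, the empty product equals $R$ and $I=\emptyset$, so both sides agree with $R$. When $\l-\l'=1$, a direct reading of Convention \ref{conv of Determinantal rings 2} shows $\mathfrak{U}_1=\{i\mid 1\le i\le t,\ \l_i\ne\l_{i-1}\}=I$, whence $\mathfrak{a}=\bigcap_{i\in I} I_B(x;\tau_i)$ is already radical.

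For the ``only if'' direction, I will assume $\l-\l'\ge 2$ and exhibit a non-zero homogeneous element of $\sqrt{\mathfrak{a}}\setminus\mathfrak{a}$. Endow $R$ with the $\mathbb{N}$-grading inherited from $B[X]$ via $\deg X_{ij}=1$, under which every minor $[a_1,\ldots,a_s|b_1,\ldots,b_s]\in\D(X;\d)$ is homogeneous of degree $s$. With $N_i$ as in Lemma \ref{degree no kensa}, set $M_h=\max_{i\in\mathfrak{U}_h}N_i$ and $N=\max_{i\in\mathfrak{U}}N_i$. By Lemma \ref{degree no kensa}(1) combined with \cite[Proposition 5.2]{BV}, $\bigcap_{i\in\mathfrak{U}_h} I_B(x;\tau_i)$ is generated in degrees at least $M_h$, hence $\mathfrak{a}$ is generated in degrees at least $\sum_{h=1}^{\l-\l'} M_h\ge N+(\l-\l'-1)\ge N+1$, using that each $\mathfrak{U}_h$ is non-empty and each $N_i\ge 1$. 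On the other hand, Lemma \ref{degree no kensa}(2) applied at $s=N$ produces the explicit non-zero homogeneous element $\alpha:=[c_1,\ldots,c_N|d_1,\ldots,d_N]\in\bigcap_{i\in\mathfrak{U}} I_B(x;\tau_i)=\sqrt{\mathfrak{a}}$ of degree exactly $N$. Since a homogeneous ideal generated in degrees strictly above $N$ has trivial degree-$N$ part, $\alpha\notin\mathfrak{a}$, and $\mathfrak{a}$ fails to be radical.

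The main obstacle, foreshadowed in the discussion preceding Lemma \ref{degree no kensa}, is precisely this degree mismatch: the Schubert argument compared the generating degree $\k-\k'$ of the canonical trace against the unit degree of the bottom element $\g$, but here the bottom element $\d$ of $\D(X;\d)$ has degree $r$, so one cannot import the argument verbatim. Lemma \ref{degree no kensa} is engineered to reconcile the two sides, simultaneously producing the lower bound $\sum_h M_h\ge N+1$ on the generating degree of $\mathfrak{a}$ and an explicit degree-$N$ witness $\alpha$ inside the radical.
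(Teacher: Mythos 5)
Your proposal is correct and follows essentially the same route as the paper: the ``if'' direction and the formula come from Proposition \ref{general case of cano trace det ring} together with primality of the $I_B(x;\tau_i)$ (noting $\mathfrak{U}_1=I$ when $\l-\l'=1$), and the ``only if'' direction uses Lemma \ref{degree no kensa} to exhibit $[c_1,\ldots,c_N|d_1,\ldots,d_N]$ as a degree-$N$ element of the radical while the product is generated in degrees at least $N+1$. The only cosmetic difference is bookkeeping: you sum the per-factor lower bounds $\sum_h M_h\ge N+(\l-\l'-1)$, whereas the paper singles out the factor realizing the maximum and notes the remaining factors are generated in positive degree.
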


\begin{proof}
Let $G=G_B(\~{X}; \~{\d})$ and $\w_G$ a graded canonical module of $G$.
First, suppose $\l-\l'\le 1$.
By Proposition \ref{general case of cano trace det ring}, if $\l-\l'=0$, then $R_B(X; \d)$ is Gorenstein; otherwise the canonical trace of $R_B(X; \d)$ is $\bigcap_{i\in I} I(x; \tau_i)$.
Hence $R_B(X; \d)$ is CTR.
Next, suppose $\l-\l'\ge 2$.
Proposition \ref{general case of cano trace det ring} asserts that the canonical trace of $R_B(X; \d)$ is $\prod_{h=1}^{\l-\l'}( \bigcap_{i\in\mathfrak{U}_h} I_B(x;\tau_i))$.
We set $\d=[c_1, \ldots, c_r | d_1, \ldots, d_r]$ and 
$$
s := {\rm max}\{ N_i \ \text{appearing in Lemma \ref{degree no kensa}} \mid i \in \mathfrak{U}_h, \ 1\le h\le \l-\l'\}.
$$
Lemma \ref{degree no kensa}(2) yields $[c_1, \ldots, c_s | d_1, \ldots, d_s]$ belongs to $\bigcap_{1\le h\le \l-\l', i\in\mathfrak{U}_h} I_B(x;\tau_i)$ which is the radical ideal of $\prod_{h=1}^{\l-\l'}( \bigcap_{i\in\mathfrak{U}_h} I_B(x;\tau_i))$.
On the other hand, taking $1\le g\le \l-\l'$ and $j \in \mathfrak{U}_g$ such that $s=N_j$, Lemma \ref{degree no kensa}(1) shows that every element of $\bigcap_{i\in\mathfrak{U}_g} I_B(x;\tau_i)\subseteq I_B(x;\tau_j)$ is generated by elements of degree at least $s$ (as an ASL).
Since $\l-\l'\ge 2$, there exists $1\le g'\le \l-\l'$ such that $g'\ne g$, and $\bigcap_{i\in\mathfrak{U}_{g'}} I_B(x;\tau_i)$ is generated by elements of positive degree.
Therefore $\prod_{h=1}^{\l-\l'}( \bigcap_{i\in\mathfrak{U}_h} I_B(x;\tau_i))$ is generated by elements of degree at least $s+1$, which means $[c_1, \ldots, c_s | d_1, \ldots, d_s]\notin \prod_{h=1}^{\l-\l'}( \bigcap_{i\in\mathfrak{U}_h} I_B(x;\tau_i))$. 
In particular, $\prod_{h=1}^{\l-\l'}( \bigcap_{i\in\mathfrak{U}_h} I_B(x;\tau_i))$ is not a radical ideal, that is, $R_B(X; \d)$ is not CTR.
\end{proof}

The theorem below follows from the same proof as Theorem \ref{main base change}, with the only modification being the use of Lemma \ref{new main lem} and \cite[Lemma 6.4]{BV} in place of Corollary \ref{miy main thm} and \cite[Lemma 6.1]{BV}, respectively.

\begin{thm}\label{main CTR det}
Under Convention \ref{conv of Determinantal rings}, we assume that $B$ is a reduced Cohen--Macaulay ring with canonical module $\w_B$.
Then $R_B(X; \d)$ is CTR if and only if $B$ is CTR and $\l-\l'\le 1$.
In these situations, let $I=\{i\mid 1\le i\le t, \l_i\ne \l_{i-1}\}$.
Then the canonical trace of $R_B(X; \d)$ is $$\Bigl(\bigcap_{i\in I} I_B(x; \tau_i)\Bigr) \cap (\tr_B (\w_B)R_B(X; \d)).$$
\end{thm}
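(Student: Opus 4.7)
The plan is to mirror the proof of Theorem \ref{main base change}, replacing Schubert-cycle specifics with determinantal-ring specifics. Set $A=R_{\Z}(X;\d)$ and $R=A\otimes_{\Z}B=R_B(X;\d)$. By \cite[Corollary 5.17]{BV} and Lemma \ref{new main lem}, $A$ is Cohen--Macaulay with a canonical module $\w_A$, and $A$ is CTR precisely when $\l-\l'\le 1$. Moreover, $\w_R:=\w_A\otimes_{\Z}\w_B$ is a canonical module of $R$ by \cite[Theorem 3.6]{BV}.

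For the ``if'' direction, assume $B$ is CTR and $\l-\l'\le 1$. By Lemma \ref{new main lem}, $\tr_A(\w_A)=\bigcap_{i\in I}I_{\Z}(x;\tau_i)$. Each quotient $A/I_{\Z}(x;\tau_i)\cong R_{\Z}(X;\tau_i)$ is $\Z$-free, hence $\Z$-torsionfree; embedding $A/\tr_A(\w_A)\hookrightarrow\bigoplus_{i\in I}A/I_{\Z}(x;\tau_i)$ shows that $A/\tr_A(\w_A)$ is $\Z$-flat. Apply Lemma \ref{HHS not field}(2) with $C=\Z$, $L=\w_A$, $N=\w_B$ to get
\[
\tr_A(\w_A)R\cap\tr_B(\w_B)R \;\subseteq\; \tr_R(\w_R) \;\subseteq\; \tr_B(\w_B)R.
\]
The generators of $\tr_A(\w_A)=\bigcap_{i\in I}I_{\Z}(x;\tau_i)$ over $A$ also generate $\bigcap_{i\in I}I_B(x;\tau_i)$ over $R$, so $\tr_A(\w_A)R=\bigcap_{i\in I}I_B(x;\tau_i)$ is radical since each $R/I_B(x;\tau_i)\cong R_B(X;\tau_i)$ is reduced. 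Likewise $\tr_B(\w_B)R\cong R_{B/\tr_B(\w_B)}(X;\d)$-quotient is reduced, so $\tr_B(\w_B)R$ is radical as well. It then suffices to show $\tr_R(\w_R)\subseteq\tr_A(\w_A)R$, whence both inclusions collapse to the claimed equality, proving $\tr_R(\w_R)$ is radical and giving the asserted description.

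To verify this last inclusion, take a prime $\p\supseteq\tr_A(\w_A)R$; then $I_B(x;\tau_i)\subseteq\p$ for some $i\in I$. Setting $\q=\p\cap B$ and $k(\q)=B_{\q}/\q B_{\q}$, the prime $P$ of $R_{k(\q)}(X;\d)$ corresponding to $R_{\p}/\q R_{\p}\cong R_{k(\q)}(X;\d)_P$ contains $I_{k(\q)}(x;\tau_i)$; Lemma \ref{new main lem} over the field $k(\q)$ forces $R_{\p}/\q R_{\p}$ to be non-Gorenstein, and flatness of $B_{\q}\to R_{\p}$ transfers this to $R_{\p}$. By Remark \ref{cano trace no remark}(2), $\p\supseteq\tr_R(\w_R)$. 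This gives $\tr_R(\w_R)\subseteq\sqrt{\tr_A(\w_A)R}=\tr_A(\w_A)R$.

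For the ``only if'' direction, assume $R$ is CTR. The key change from the Schubert-cycle proof is to replace \cite[Lemma 6.1]{BV} by \cite[Lemma 6.4]{BV}, which realizes $A_{\d}$ as a localization of a polynomial ring over $\Z$ by a prime element; hence $A_{\d}$ is a UFD and $(\w_A)_{\d}\cong A_{\d}$. Then $(\w_R)_{\d}\cong R_{\d}\otimes_B\w_B$, and flatness of $R_{\d}$ over $B$ yields
\[
(\tr_R(\w_R))R_{\d}=(\tr_B(\w_B))R_{\d},
\]
which is radical by hypothesis. Again by \cite[Lemma 6.4]{BV}, $R_{\d}/(\tr_B(\w_B))R_{\d}$ is a localization of a polynomial ring over $B/\tr_B(\w_B)$ by a prime element, so its subring $B/\tr_B(\w_B)$ is reduced; thus $B$ is CTR. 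Finally, for a minimal prime $\q$ of the reduced ring $B$, $B_{\q}$ is a field and $R_{\q}\cong R_{B_{\q}}(X;\d)$ is CTR, so Lemma \ref{new main lem} gives $\l-\l'\le 1$. The only step requiring care is the flatness of $A/\tr_A(\w_A)$ over $\Z$ that powers the application of Lemma \ref{HHS not field}; all other steps are parallel to Theorem \ref{main base change} with the indicated substitutions.
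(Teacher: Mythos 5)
Your proposal matches the paper's own argument: the paper states that Theorem~\ref{main CTR det} follows by repeating the proof of Theorem~\ref{main base change} with Lemma~\ref{new main lem} substituted for Corollary~\ref{miy main thm} and \cite[Lemma 6.4]{BV} substituted for \cite[Lemma 6.1]{BV}, which is precisely the substitution scheme you carry out, and the details you fill in (the $\Z$-flatness of $A/\tr_A(\w_A)$ via the embedding into $\bigoplus_{i\in I}R_{\Z}(X;\tau_i)$, the radicality of $\tr_A(\w_A)R$ and $\tr_B(\w_B)R$, and the passage through residue fields for the reverse inclusion) are all correct.
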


We conclude the paper by examining examples of CTR and non-CTR cases.

\begin{ex}
Example \ref{kireina det ring ex} treated the case where the canonical trace is given by a power of a single prime ideal, from which we could decide whether the ring is CTR.
Here, we consider an example in which the canonical trace is expressed in terms of two prime ideals.
We work under Convention \ref{conv of Determinantal rings} and assume for simplicity that $B$ is reduced and Gorenstein.

(1) Let $m=3$, $n=5$, and $\d=[1\ 3 | 1\ 4]$.
Then $\~{\d}=[1\ 4\ 7]$, $t=2$, $\~{\tau_1}=[4\ 5\ 7]$, $\~{\tau_2}=[1\ 7\ 8]$, 
$\tau_1=[1\ 3 | 4\ 5]$, $\tau_2=[3 | 1]$, $\l-\l'=2$, $\mathfrak{U}_1=\{1\}$, and $\mathfrak{U}_2=\{2\}$.
By Proposition \ref{general case of cano trace det ring}, we obtain
$$I(x; [1\ 3 | 4\ 5])\cdot I(x; [3 | 1]).$$
In particular, $R(X; \d)$ is not CTR.

(2) Let $m=4$, $n=4$, and $\d=[1\ 3\ 4 | 1\ 3\ 4]$.
Then $\~{\d}=[1\ 3\ 4\ 7]$, $t=2$, $\~{\tau_1}=[3\ 4\ 5\ 7]$, $\~{\tau_2}=[1\ 4\ 7\ 8]$, 
$\tau_1=[1\ 3 | 3\ 4]$, $\tau_2=[3\ 4 | 1\ 3]$, $\l-\l'=1$, and $\mathfrak{U}_1=\{1, 2\}$.
By Proposition \ref{general case of cano trace det ring}, we have 
$$I(x; [1\ 3 | 3\ 4]) \cap I(x; [3\ 4 | 1\ 3]).$$
This means that $R(X; \d)$ is CTR.

(3) Let $m=4$, $n=5$, and $\d=[1\ 3\ 4 | 1\ 3\ 4]$.
Then $\~{\d}=[1\ 3\ 4\ 8]$, $t=2$, $\~{\tau_1}=[3\ 4\ 5\ 8]$, $\~{\tau_2}=[1\ 4\ 8\ 9]$, 
$\tau_1=[1\ 3\ 4 | 3\ 4\ 5]$, $\tau_2=[3\ 4 | 1\ 4]$, $\l-\l'=2$, $\mathfrak{U}_1=\{1\}$, and $\mathfrak{U}_2=\{1, 2\}$.
By Proposition \ref{general case of cano trace det ring}, we get
$$I(x;[1\ 3\ 4 | 3\ 4\ 5]) \cdot (I(x;[1\ 3\ 4 | 3\ 4\ 5]) \cap I(x; [3\ 4 | 1\ 4])).$$
Hence $R(X; \d)$ is not CTR.
This case differs from (2) only by increasing $n$ by one.

(4) Theorems \ref{main base change} and \ref{main CTR det} mean that general results on tensor products such as \cite[Corollary 3.11]{Miy} hold whenever one of the factors is a Schubert cycle or a determinantal ring.
In other words, let $R$ be a reduced Cohen--Macaulay $B$-algebra with canonical module, and $S$ a ring, which is of the form $G(X; \g)$ or $R(X; \d)$ over $B$.
Then $R\otimes_B S$ is CTR if and only if $R$ and $S$ are CTR.
\end{ex}

\begin{ac}
The author would like to express sincere gratitude to Mitsuhiro Miyazaki for valuable discussions and advice on this research. 
In particular, as mentioned in the main text, the author is deeply grateful for his understanding and consideration regarding the treatment of some of the results in this paper. 
The author also thanks Hiroki Matsui, Yuya Otake, and Ryo Takahashi for their helpful comments.
The author was partly supported by Grant-in-Aid for JSPS Fellows Grant Number 23KJ1117.
\end{ac}


\end{document}